\numberwithin{equation}{section}
\makeatletter\@addtoreset{equation}{section} 
			\newtheorem{theorem}{Theorem}[section]
			\newtheorem{definition}[theorem]{Definition}
			\newtheorem{lemma}[theorem]{Lemma}
			\newtheorem{proposition}[theorem]{Proposition}
			\newtheorem{corollary}[theorem]{Corollary}
			\newtheorem{remark}[theorem]{Remark}
\newcommand{\C}{\mathbb C}   
 \newcommand{\R}{\mathbb R}  
  \newcommand{\Z}{\mathbb Z} 	 
  \newcommand{\Hq}{\mathbb H}
   \newcommand{\Sq}{\mathbb S}
       \newcommand{\wSq}{\widetilde{\Sq}}
   \newcommand{\As}{|\wSq|}
    \newcommand{\Ch}{\mathcal{C}_{\Hq}}
     \newcommand{\Cc}{\mathcal{C}_{\C}}
\newcommand{\norm}[1]{{\left\|{#1}\right\|}}    
   \newcommand{\scal}[1]{{\left\langle{#1}\right\rangle}}
    \newcommand{\bz}{\overline{z}}
\newcommand{\srn}{\mathcal{SR}_{n}}
\newcommand{\LgH}{L^{2}_\mu(\Hq)}
\newcommand{\LgS}{L^{2}_{\mu_I}(\C_I)}
\newcommand{\Hnew}{\mathcal{H}^{2}_{slice}(\Hq)}
\newcommand{\bq}{\overline{q} }
\newcommand{\bp}{\overline{p}}
\newcommand{\bxi}{\overline{\xi}}
\begin{document}

\title{Weighted  quaternionic Cauchy singular integral}
\author{A. Elkachkouri}  
\author{A. Ghanmi}

\address{Analysis, P.D.E $\&$ Spectral Geometry, Lab M.I.A.-S.I., CeReMAR, 
	Department of Mathematics,  
	P.O. Box 1014,  Faculty of Sciences, 
	Mohammed V University in Rabat, Morocco}

\email{elkachkouri.abdelatif@gmail.com}

 \email{allal.ghanmi@um5.ac.ma/ag@fsr.ac.ma}

\begin{abstract}
We investigate some spectral properties of the weighted quaternionic Cauchy transform when acting on the right quaternionic Hilbert space of Gaussian integrable functions. We study its boundedness, compactness, and memberships to $k$-Schatten class, and we identify its
range. This is done by means of its restriction to the n-th S-polyregular Bargmann space of second kind, for which we provide an explicit closed expression for its action on the quaternionic It\^o--Hermite polynomials constituting an orthogonal basis. We also exhibit an orthogonal basis
of eigenfunctions of its n-Bergman projection leading to the explicit determination of its singular values. The obtained results generalize those given for weighted Cauchy transform on the complex plane to the quaternionic setting.
\end{abstract}

	\subjclass[2020]{Primary 45E05; 44A15; 30G35;  Secondary 47B32; 30H20} 

\keywords{Cauchy transform; S-polyregular Bargmann space; Quaternionic It\^o--Hermite polynomials; Range; Singular values; $k$-Schatten class}

\maketitle
	
	\maketitle

\section{INTRODUCTION}

In \cite{BirkhoffvonNeumann1936}, Birkhoff and von Neumann showed that the Schr\"odinger equation can also be formulated using the quaternionic numbers. 
Since then, there was an increasing interest in studying and developing the quaternionic analog of quantum mechanics \cite{Adler1997,Emch1963,FinkelsteinJauchSchiminovichSpeiser1962}.
This motivated, in particular, the investigation of the spectral theory for quaternionic linear operators and the theory of quaternionic groups. 
However, one of the main obstacle to develop such a spectral theory was the lack of a precise notion of a quaternionic spectrum.
The discovery of the notion of S-spectrum, in 2006 by F. Colombo and I. Sabadini, allowed to fully develop of quaternionic operator theory  \cite{11ColomboSabadiniStruppa2011}, and of the intrinsic S-functional calculus (the quaternionic version of the Riesz--Dunford functional calculus).
A complete description of S-calculus based on the Cauchy formula and the slice hyperholomorphic kernels is given in \cite{8ColomboSabadini2009}.  See also the seminal papers \cite{4AlpayColomboGantnerSabadini2015,9ColomboSabadini2010} as well as \cite{10Gantner2020}. 
In fact, with the introduction of the new notion of slice regular functions on the quaternions \cite{GentiliStruppa07}, it is observed 
that the Cauchy formula holds on every slice, and its kernel is given by 
\begin{align}\label{CkC}
	 \sum_{n=0}^{\infty} q^n p^{-n-1} = \frac{1}{p-q},
\end{align}
valid for $p,q$ belonging to the same slice and such that $|q|< |p|$. The closed formula for this expansion series for arbitrary 
quaternions $p,q$; $|q|< |p|$, was given by F. Colombo and I. Sabadini, 
\begin{align}\label{CkH}  \sum_{n=0}^{\infty} q^n p^{-n-1} = -(q^2 - 2 q\Re (p) + |p|^2)^{-1}(q-\bp),
\end{align}
which leads to a closed formula of the formal series $ \sum_{n=0}^{\infty} T^n p^{-n-1}$, for given bounded linear operator $T$ on the division algebra of quaternions $\Hq$, and therefore to the natural notion of the S-spectrum defined as
$$ \sigma_S(T) := \{p \in \Hq; \, T^2 - 2 \Re(p) T + |p|^2Id \mbox{ is not invertible} \}.$$ 
Notice for instance that the point S-spectrum is exactly the set of right eigenvalues \cite{8ColomboSabadini2009,GhiloniMorettiPerotti2013}.

Such notion of S-spectrum had interesting consequences on the precise formulation of the spectral theorem for quaternionic linear operators. In particular, a full proof of this fundamental 
theorem for both bounded and unbounded normal operators has been given by D. Alpay, F. Colombo, D.P. Kimsey in \cite{5AlpayColomboKimsey2016}.
The developed quaternionic calculus leads to the study 
of evolution operators \cite{ColomboSabadini2011}, and the perturbation of normal quaternionic operators  and the existence of the associated invariant subspace \cite{3CerejeirasColomboKahlerSabadini2019}.
It also allows the development of Schur analysis in the context of slice hyperholomorphic  
\cite{2AlpayColomboSabadini2016}.  
The interested reader can find in \cite[Subsection 1.2.1]{6ColomboGantnerKimsey2018} a well explanation how hypercomplex analysis methods were used to identify the appropriate notion of quaternionic
spectrum.
For a recent overview of the spectral theory on the S-spectrum, its related problems and 
the most recent advances in its applications, we can consult
 \cite{6ColomboGantnerKimsey2018,7ColomboGantner2019,12ColomboGantnerPinton2021} and the references therein.  
For the spectral theory on the S-spectrum for Clifford operators, we refer the reader to \cite{11ColomboSabadiniStruppa2011,5AlpayColomboKimsey2016}.

On the other hand, associated with the classical Cauchy formula for the complex holomorphic functions, one considers the singular integral transform on the whole complex plane  defined by
\begin{align}\label{CT}
	\Cc f(z) :=  \frac 1\pi \int_{\C} \frac{f(\xi)}{z-\xi} e^{-|\xi|^2}dxdy ; \xi =x+iy, 
\end{align}
and acting on the Hilbert space $L^2_g(\C)=L^2(\C,e^{-|z|^2}dxdy)$ of gaussian functions.
The kernel function in \eqref{CT} is the Cauchy kernel function in the right hand side of \eqref{CkC}.
Such transform and their variants (on bounded domains and their boundaries) have attracted the interest of many authors, and play a crucial role in developing theory of holomorphic functions in complex analysis, Dirichlet problem in potential theory,  and in proving interpolation theorems, as well as in providing simple proof of Corona theorem.
The operator in \eqref{CT} is a bounded, compact integral operator on $L^2_g(\C)$.
 Moreover,  it belongs  to the $k$-Schatten class for every $k>2$.  
The spectral properties of $\Cc$ have been investigated by M.R. Dostani\'c  in \cite{Dostanic2000} and A. Intissar and A. Intissar in \cite{In}. 
In \cite{Dostanic2000}, Dostani\'c gave 
the exact asymptotic behavior of the singular values of $\Cc$ (the eigenvalues of $|\Cc|:= (\Cc^*\Cc)^{1/2}$), and of its orthogonal projection $P \Cc$ onto the classical Bargmann space $\mathcal{F}^2(\C)$ of entire functions in $L^2_g(\C)$.  
The polyanalytic setting is studied in \cite{In}, and make appeal to the  Hilbertian orthogonal decomposition of $L^2_g(\C)$
 in terms of the generalized Bargmann spaces $\mathcal{F}^2_n(\C)= \ker(\Delta - n)$, realized as the $n$-th $L^2$--eigenspace of the Landau operator $\Delta = \partial_z\partial_{\bz} - \bz \partial_{\bz}$. 
 The quaernionic analogs of $\mathcal{F}^2(\C)$ and $\mathcal{F}^2_n(\C)$ are introduced and studied in \cite{AlpayColomboSabadiniSalomon2014} and \cite{BenElhGh2019}, respectively.

Motivated by the aforementioned works, we propose to study the Cauchy singular transform in the quaternionic  setting, which involves additional technical problems for the lack of commutativity. As far as we know this has not been studied in the literature, despite that  
the Cauchy integral formula and the S-functional calculus are well known on the slice regular functions.     
 In the present paper, we deal with the quaternionic analog of the $\Cc$
 associated with the kernel function in \eqref{CkH}. To this purpose, we consider  the right Hilbert space 
 $\LgH := L^{2}_{\Hq}(\Hq;d\mu)$
  of quaternionic-valued square integrable functions on  $\Hq$ with respect to the Gaussian density $d\mu=e^{-|q|^2}d\lambda$, where $d\lambda$ is the standard Lebesgue measure on $\Hq\simeq \R^4$. The associated norm denoted by $\norm{\cdot}_{\Hq}$ is the one induced from the scalar product  
 \begin{equation}\label{spfgHq}
 	\scal{f,g}_{\Hq} = \int_{\Hq}\overline{f(q)} g (q) d\mu(q).
 \end{equation}
The weighted quaternionic Cauchy transform (WQCT), with respect to the measure $\mu$, is defined as a singular integral operator on $\LgH$ given by
\begin{equation}\label{wCauchyH}
\Ch f(q):=\dfrac{1}{\pi}\int_{\Hq}  \mathcal{N}(q,p) f(p)    d\mu(p)
\end{equation}
for $q\in \Hq$, where $\mathcal{N}$
 denotes the Cauchy kernel function given by
 \begin{eqnarray}\label{CauchyKernel} \mathcal{N}(p,q) :=  \left( (p-\bq)^{-1} p ((p-\bq)) - q \right)^{-1} . 
 \end{eqnarray}  
 The transform in \eqref{wCauchyH} is in fact a variant form of the one considered in \cite{ElhamyaniPhThesis2017}. Here, we have taking into account the linearity of such transform (when acting on right vector spaces) and the left slice regularity of the images when $\Ch$ acts on left slice regular functions.
 

Our main purpose is to investigate some spectral properties of the WQCT $\Ch $. 
 Mainly, we show that $\Ch$ is a bounded and compact operator in $\LgH$, belonging to the $k$-Schatten class for every $k>2$. We also give its explicit action on the so-called $n$-th S-polyregular Bargmann space of second kind $\mathcal{F}_{n,slice}^{2}$,  realized as the space of $L^2$-eigenfunctions of a slice differential operator \cite{BenElhGh2019}. This was possible using the quaternionic It\^o--Hermite polynomials that constitute a complete orthogonal system in $\LgH$. Moreover, we study the operator $P_n\Ch $ defined as the $n$-th Bergman projection of $\Ch$, $P_n$ being the $n$-Bergman projector onto $\mathcal{F}_{n,slice}^{2}$ given by 
	\begin{align} \label{OrthProjeq}
	P_nf(q) &= \dfrac{1}{\pi} \int_{\Hq}   e_{*}^{[\bq,p]} {\,\, {{\star}_{sp}^q}\,\, }  L_{n}(|q - p|_{{{\star}_{sp}^q}}^2 )    f(p) d\mu(p) ,
	\end{align} 
where $L_{n}$ is the $n$-th Lageurre polynomial and $\star$ denotes the star product for poluregular functions (these quantities are specified in the next section).
We explicit its Schwartz kernel function, we identify its range and determinate its singular values, i.e., the nonzero eigenvalues of $|P_{n}\Ch  |:=((P_{n}\Ch  )^{\ast}P_{n}\Ch)^{1/2}$.
Some of the obtained results generalize those elaborated in \cite{Dostanic2000,In,Gh2020} for the weighted complex Cauchy transform $\Cc$ on $L^2_g(\C)$.

 This paper is organized as follows.
 In Section 2, we review from \cite{BenElhGh2019,ElHamyani2018}, the basic tools related to the structure of the $n$-th S-polyregular Bargmann space. 
 In Section 3, we study the basic properties of the weighted quaternionic Cauchy transform $\Ch $, including boundedness, compactness, membership in $k$-Schatten class and its explicit action on the quaternionic It\^o--Hermite polynomials. Description of its range is also discussed. In Section 4, we study its  $n$-Bergman projection $P_{n}\Ch $ on $\mathcal{F}_{n,slice}^{2}$ and {the whole $\LgH$}, and we obtain
 the singular values as well as their exact asymptotic behavior.
 
 \section{PRELIMINARIES} 
  Let $\Hq$ denotes the real skew algebra of quaternions defined as the $4$-component extended complex numbers 
 $q=x_0+x_1\mathbf{i}+x_2\mathbf{j}+x_3\mathbf{k} \in{\Hq}$, where  $x_0,x_1,x_2,x_3\in{\mathbb{R}}$, endowed with the Hamiltonian computation rules so that the imaginary units $\mathbf{i}$, $\mathbf{j}$ and $\mathbf{k}$ satisfy 
 $\mathbf{i}^2=\mathbf{j}^2=\mathbf{k}^2=\mathbf{i}\mathbf{j}\mathbf{k}=-1$;
 $\mathbf{k}\mathbf{i}=-\mathbf{i}\mathbf{k}=\mathbf{j}.$
 The algebraic conjugate and the modulus are defined by 
 $x_0-x_1i-x_2j-x_3k$ and $|q|=\sqrt{q\overline{q}}$, respectively. 
 If $\mathbb{S}$ denotes the set of imaginary units $I^2=-1$,
 identified with the unit sphere $S^2$ in $\Im \Hq=\R \mathbf{i}+\R\mathbf{j}+\R\mathbf{k}$, then we can rewrite any $q\in \Hq$ as $ q=re^{I_q\theta}$ (polar representation) or as 
 $q=x+I_q y$ (slice representation). Here  $r=|q| \geq 0$, $I_q\in \mathbb{S}$, $\theta \in [0,2\pi[$ and $x,y\in \R$. The last representation gives rise to the notion of slice plane 
 $\C_{I} := \mathbb{R}+\mathbb{R}I$, so that 
 $$  \cap_{I\in \mathbb{S}} \C_{I}=\R \quad 
 \mbox{and} \quad  \Hq= \bigcup_{I\in \mathbb{S}} \C_{I}=  \bigcup_{I\in \wSq} \C_{I},$$
 where $\wSq$ denotes the hemisphere of purely imaginary quaternions defined as
 $$\wSq:= \{ I= \cos(\phi) \mathbf{i} +
 \sin(\phi)\cos(\psi) \mathbf{j} + \sin(\phi)\sin(\psi)  \mathbf{k}; \, \phi,\psi\in(0,\pi)
 \},$$
 and endowed with surface area measure $d\sigma(I) = \sin(\phi) d\phi d\psi$.  Thereafter,  $\As$ will denote the area of $\wSq$.
 
 In accordance with the slice representation of quaternionic numbers, Gentili and Struppa were able in 2007 to develop the theory of slice regular functions on specific domains of
 $\Hq$ (see \cite{GentiliStruppa07}). The introduced theory is similar to the one of holomorphic functions.
 A given quaternionic valued real differentiable function $f$, on $\Hq\equiv \R^4$, 
 is said to be left slice  regular, and we write $f \in \mathcal{SR}(\Hq)$, if its 
  restriction  $f|_{\C_I}$ to any slice $\C_{I}$; $I\in \Sq=\{q\in{\Hq};q^2=-1\}$, 
satisfies the Cauchy--Riemann equation $\overline{\partial_I} f=0$, where $\overline{\partial_I} f$ stands for the conjugate of the left slice derivative,  
 \begin{align}\label{dbar}
 ( \overline{\partial_I} f)(x+Iy) := 
 \dfrac{1}{2}\left(\frac{\partial }{\partial x}+I\frac{\partial }{\partial y}\right)f|_{\C_I}(x+yI).
 \end{align}
 One of the main tool in this theory is the so-called Splitting lemma stating that  for every slice regular function $f \in  \mathcal{SR} (\mathbb{H})$, and mutually perpendicular $I$ and $J$  in $\mathbb{S}^{}$, there are two holomorphic functions $F, G:\mathbb{H}\cap \mathbb{C}_{I} \longrightarrow \mathbb{C}_{I}$ such that for any $z = x + yI,$ it is
 $f_{I}(z) = F(z) + G(z)J$.
 Another, essentials result is the presentation formula asserting that for every $f \in  \mathcal{SR} (\mathbb{H}),$ we have the following equality,
 $$f(x + yI) =\frac{1}{2}(1-IJ)f(x + yJ)+\frac{1}{2}(1+IJ)f(x - yJ).$$ 
 The first functional space we are interested in is the slice quaternionic Bargmann-Fock space defined in \cite{AlpayColomboSabadiniSalomon2014} as   
  \begin{equation}\label{SliceHyperBarg}
   \mathcal{F}_{slice}^{2} := \left\{f= \sum_{n=0} q^n c_n; \,\, c_n \in \Hq; \quad \sum_{n=0}^{+\infty} n!|c_n|^{2}<+\infty \right\}.
    \end{equation}
 It can be realized as closed subspace of slice regular functions $f \in \mathcal{SR}(\Hq)$ in the Hilbert space $\LgS := L^{2}_{\Hq}(\C_I,d\mu_I)$
 endowed with the scalar product 
 \begin{equation}\label{spfg}
 \scal{\varphi,\psi}_{\C_I} = \int_{\C_I}\overline{\varphi(x+Iy)} \psi(x+Iy)  d\mu_I(q),
 \end{equation}
where $d\mu_I(x+Iy) = e^{-x^2-y^2}dxdy$. 
 In \cite{BenElhGh2019}, the space $\mathcal{F}_{slice}^{2}$ has been realized as a specific $L^2$-eigenspace of the semi-elliptic (slice) second-order differential operator
 \begin{equation}\label{Laplaceian}
 \Box_{q}= - \partial_s \overline{\partial_s} + \overline{q} \overline{\partial_s},
 \end{equation}
 where for $q=x+I_{q} y \in \Hq$, we have 
 \begin{equation}\label{120}
 \overline{\partial_s} f(q)=\left\{
 \begin{array}{ll}
 \overline{\partial_{I_q}} f 
 (x+I_q y)
 , & \hbox{if }  y\ne 0,  \\
 \dfrac{df}{dx}(x), & \hbox{if }  y=0.
 \end{array}
 \right. 
 \end{equation}
 
 The generalization to the $n$-polyregular functions $\srn(\Hq)$, i.e., those functions whose $(n+1)$-th slice derivative $\overline{\partial_I}^{n+1}$ vanishes identically on $\Hq$ for every $I\in \Sq$, is performed by considering  the eigenvalue problem  $ \Box_{q}f=n f$ for $f$ in $\LgH$. 
 Here we focus our attention on the $n$-th S-polyregular space of second kind, denoted $\mathcal{F}_{n,slice}^{2}$.
More explicitly, the space $ \mathcal{F}_{n,slice}^{2}$ is characterized as the right $\Hq$-vector space of all convergent series (on the whole $\Hq$) of the form
$$ f(q) = \sum_{m=0}^{+\infty} H^Q_{m,n}(q,\bq)\alpha_{m}$$
belonging to $\LgS $. The occurring  quaternionic coefficients $\alpha_{m}$ are independent of $q$ and $I$, and satisfy the growth condition (clearly independent of $I$)
$$ \sum_{m=0}^{+\infty} m!|\alpha_{m}|^2 <+\infty.$$ 
In the above formula,  $H^Q_{m,n}$ stands for the $(m,n)$-th quaternionic It\^o--Hermite polynomial \cite{ElHamyani2018}, explicitly given by
 \begin{align}\label{chp}
 H^Q_{m,n}(q,\overline{q})
 =   \sum_{\ell=0}^{m\wedge n}(-1)^{\ell} \ell! \binom{m}{\ell}\binom{n}{\ell} q^{m-\ell}\overline{q}^{n-\ell}. 
 \end{align} 
Here and elsewhere after $m\wedge n=\min(m, n)$ and $m\vee n=\max(m, n)$ for given nonnegative numbers.
 The system 
 $$ \varphi^Q_{m,n}(q,\overline{q}):= (\pi m!n!\As)^{-1/2} H^Q_{m,n}(q,\bq )$$
  with varying $m,n=0,1,2,\cdots,$ is orthonormal in  
 $\LgH$. For fixed $n$, they form a complete orthonormal system for $\mathcal{F}_{n,slice}^{2}$, endowed with the norm induced from \eqref{spfg}.
 To authorize $H^Q_{m,n}$ taking  negative index, we have to use its hypergeometric representation 
 \begin{align}
 H^Q_{m,n}(q,\bq) &=c_{m,n}  
  \frac{q^m\overline{q}^n}{|q|^{2(m\wedge n)}}  {_1F_1}\left( \begin{array}{c} -(m\wedge n) \\ |m-n|+1 \end{array}\bigg | |q|^{2}  \right)  \label{16negative}
 \end{align}
 with 
 	\begin{equation} \label{constcmn}
 c_{m,n}:=  \frac{(-1)^{m\wedge n} (m\vee n)!}{|m-n|!}.
 \end{equation}
 Here, ${_1F_1}$ denotes the classical Kummer's function  \cite[Ch. 6, p. 262]{MagnusOberhettingerSoni1966}
 \begin{align}\label{NewHypergeometricFct}
 {_1F_1}\left(  \begin{array}{c} a  \\ c \end{array} \bigg | x \right)
 =\frac{\Gamma(c)}{\Gamma(a)}\sum_{j=0}^\infty \frac{\Gamma(a+j)}{\Gamma(c+j)} \frac{x^j}{j!} 
 \end{align}
for given $a,c,x\in \R$  with  $c\ne 0,-1,-2,\cdots$.  
Thus, we write   
\begin{equation}\label{16moins1}
H^Q_{-1,n}(q,\bq) :=-\dfrac{\bq^{n+1}}{n+1}    {_1F_1}\left( \begin{array}{c} 1 \\ n+2 \end{array}\bigg | |q|^{2}  \right) .
\end{equation}
The sequence of considered spaces $\mathcal{F}_{n,slice}^{2}$ form an orthogonal slice Hilbertian decomposition for $\LgS $ 
 \cite{BenElhGh2019},
 \begin{align*}
\LgS =\bigoplus_{n= 0}^{+\infty} \mathcal{F}_{n,slice}^{2} ,
\end{align*}
in the sense that for every $\varphi\in \LgS$ there exists a sequence of quaternions  $\alpha_{m,n}^I$, depending in $I\in\Sq$, and satisfying the growth condition 
	\begin{align} \label{GrCint}
	\pi \sum_{m=0}^\infty \sum_{n=0}^\infty m!n! |\alpha_{m,n}^I|^2  < +\infty 
	\end{align}  
such that
$$ \varphi(q) = \sum_{m=0}^\infty \varphi^I_n(q) 
$$
with 
\begin{align} \label{expslice}
\varphi^I_n(x+Iy) = \sum_{m=0}^\infty H^Q_{m,n}(x+Iy, x-Iy) \alpha_{m,n}^I .
\end{align} 
The functions $\varphi^I_n$ can be extended to the whole $\Hq$ leading to a S-polyregular function of order $n-1$ by considering 
\begin{align} \label{expsliceExt}
\widetilde{\varphi}^I_n(x+Jy) = \sum_{m=0}^\infty H^Q_{m,n}(x+Jy, x-Jy) \alpha_{m,n}^I 
\end{align} 
for any $J\in \Sq$. Clearly, we have $\widetilde{\varphi}^I_n \in \mathcal{F}_{n,slice}^{2}$.
The situation is different when considering functions $f$ on $\Hq$ which admits a slice decomposition in terms of the quaternionic It\^o--Hermite polynomials, i.e. for each $I\in \Sq$, we have 
$$ f|_{\C_I}(q) = \sum_{m=0}^\infty f^I_n(q)  = \sum_{m=0}^\infty\sum_{n=0}^\infty H^Q_{m,n}(q,\bq) \alpha_{m,n}^I .$$
In this case the $f^I_n$ are S-polyregular but do not belong necessary  to $\mathcal{F}_{n,slice}^{2}$.
However,   
$ f^I_n \in \LgH$ (resp. $f\in \LgH$) if and only if the coefficients 
satisfy 
\begin{align} \label{GrCintAv}
\sum_{m=0}^{+\infty}  m!n!\int_{\wSq}|\alpha_{m,n}^I|^2 <+\infty 
\quad (resp. \, \sum_{m=0}^{+\infty} \sum_{n=0}^{+\infty} m!n!\int_{\wSq}|\alpha_{m,n}^I|^2 <+\infty).
\end{align}
This holds true when for example $\alpha_{m,n}^I$ are constant with respect to $I$. 
This conditions is fulfilled if we restrict ourself to the space $\Hnew$ of all quaternionic valued functions on $\Hq$, $f : \Hq \longrightarrow \Hq$, whose restriction
$f|_{\C_I}$, belongs to $L^{2}_{\Hq}(\C_I,d\mu_I)$ for any $I\in\wSq$
and such that   

\ \begin{equation}\label{newnorm}
\norm{ f} _{\infty}:=\sup_{I\in \wSq} \norm{f|_{\mathbb{C}_{I}}}_{\mathbb{C}_{I}} 
\end{equation}
$$ $$
is finite. 
	We claim that $\norm{\cdot}_{\infty}$ defines a norm in $\Hnew$, and that $(\Hnew, \norm{\cdot}_{\infty})$ is a  complete   space which is contained in $\LgH$ with  
	$ \norm{f}_{\Hq} \leq \As \norm{f}_{\infty}.$

We conclude this section by noticing that $ \mathcal{F}_{n,slice}^{2} \subset \Hnew$, and that its elements satisfy the estimate	\begin{equation}\label{ineq}
|f(q)|  \leq  \frac{e^{ \frac{|q|^2}{2}}}{\sqrt{\pi}}  \norm{f}_{\C_I}
\end{equation}
for any $I\in \Sq$. 
 This is to say, the evaluation map $\delta_{q}f=f(q)$, for every fixed $q\in \Hq$, is a continuous linear form on  $\mathcal{F}_{n,slice}^{2}$, and therefore $\mathcal{F}_{n,slice}^{2}$ is a reproducing kernel Hilbert space. Thus, there exists some function $\mathcal{K}_{n}$ on $\Hq\times \Hq$ such that $p \longmapsto \mathcal{K}_{n}(q,p)
= \overline{\mathcal{K}_{n}(p,q)}$ belongs to $\mathcal{F}_{n,slice}^{2}$ and satisfies the reproducing property 
$f(q) = \scal{\mathcal{K}_{n}(\cdot,q),f}_{\C_I}$
for every $f\in \mathcal{F}_{n,slice}^{2}$. This 
 kernel function can be expanded as 
 \begin{align}\label{ExpansionKer1}
 \mathcal{K}_{n}(q,p)&= \sum_{m=0}^\infty \frac{H^Q_{m,n}(q,\bq) \overline{H^Q_{m,n}(p,\bp)}}{\pi m!n!} .
 \end{align}
Its closed expression is given by \cite[Theorem 3.6]{BenElhGh2019} 
\begin{align}\label{ExpRepKer1}
\mathcal{K}_{n}(q,p)&= \dfrac{e_{*}^{[\overline{q},p]}}{\pi} {\, {\star}^q_{sp}\,} L 
(|q - p|_{{\star}^q_{sp}}^2 ),
\end{align}
where $|q - p|_{{\star}^q_{sp}}^2$  denotes the  S-regularization of the modulus function $q \longmapsto (p-q)(\bp-\bq)$ with respect to  the left star product $\star^q_{sp}$  for S-polyregular functions in $q$, $L$ the classical Laguerre polynomials, and $$e_{*}^{[a,b]}= \sum_{k=0}^\infty \frac{a ^k b^k}{k!}.$$ 

Moreover, the integral operator 
	\begin{align} \label{OrthProjslice}
	P_nf (q) &  = \int_{\C_I} \mathcal{K}_{n} (p,q) f(p) d\mu_I(p)   
	\end{align}
 is the orthogonal projection of $\LgS$ on $\mathcal{F}_{n,slice}^{2}$. 
The problem of extending $P_n$ to a larger Hilbert space can be handled by averaging the coefficients over the hemisphere $\wSq$. 
Thus, for each nonnegative integer $n$, we define the transform  $P_n$ on $\Hnew$ by the series
\begin{align} \label{OrthProjeqseriesint}
P_nf (q) &= \frac{1}{\As} \sum_{m=0}^\infty 
H^Q_{m,n} (q,\bq)  \int_{\wSq} \alpha_{m,n}^I d\sigma(I).
\end{align}
which is absolutely and uniformly  convergent on compact sets of $\Hq$ thanks to \eqref{GrCintAv}. It belongs to $ \mathcal{F}_{n,slice}^{2}$ and one recovers \eqref{OrthProjslice} when $f\in \LgS$. This justifies the use of the same notation $P_n$ for both projectors.   
Moreover, using the fact  
$$  \scal{ H^Q_{m,n} , f }_{\Hq} =  	\pi m!n! \int_{\wSq} \alpha_{m,n}^I d\sigma(I) $$
and the orthogonality of $H^Q_{m,n}$ in $ \LgH $, we can rewrite $P_n$ in \eqref{OrthProjeqseriesint}  as 
\begin{align} \label{OrthProjeqseriesint2}
P_nf (q) = \frac{1}{\As}
\scal{ \mathcal{K}_{n}(\cdot,q) , f}_{\Hq},
\end{align}   
Thus, we have proved the following 

\begin{lemma} 
	The transform $P_n$ in \eqref{OrthProjeqseriesint2} defines an orthogonal projection of 
	$\LgH$ onto the S-polyregular Bargmann space $\mathcal{F}_{n,slice}^{2}$. Its integral representation is given by \eqref{OrthProjeqseriesint2},
	\begin{eqnarray} \label{OrthProjeq}
P_nf(q) = \dfrac{1}{\pi \As} \int_{\Hq}   
L_{n}
(|q - p|_{ \star^q_{sp}}^2 ) 
{ \,\star^q_{sp} \, }
e_{*}^{[\bq,p]} f(p) d\mu(p) .
\end{eqnarray} 
\end{lemma}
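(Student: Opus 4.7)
The plan is to verify the three defining properties of an orthogonal projection --- that $P_{n}$ has range in $\mathcal{F}_{n,slice}^{2}$, acts as the identity on $\mathcal{F}_{n,slice}^{2}$, and is self-adjoint with respect to $\scal{\cdot,\cdot}_{\Hq}$ --- using the series form \eqref{OrthProjeqseriesint} together with its inner-product form \eqref{OrthProjeqseriesint2}, and then to recover the integral formula \eqref{OrthProjeq} by substituting the closed expression \eqref{ExpRepKer1} of the reproducing kernel $\mathcal{K}_{n}$.

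First, I would confirm that $P_{n}$ is well defined as a bounded map $\LgH\longrightarrow \mathcal{F}_{n,slice}^{2}$. For $f\in\LgH$, let $\alpha_{m,n}^{I}$ denote the coefficients of its slice expansion \eqref{expslice} and set
$$\beta_{m}:=\dfrac{1}{\As}\int_{\wSq}\alpha_{m,n}^{I}\,d\sigma(I).$$
A Cauchy--Schwarz estimate on the averaging combined with the integrability condition \eqref{GrCintAv} yields $\sum_{m}m!\,n!|\beta_{m}|^{2}<+\infty$, so that $P_{n}f=\sum_{m\ge 0}H^{Q}_{m,n}(\cdot,\overline{\cdot})\beta_{m}$ belongs to $\mathcal{F}_{n,slice}^{2}$ and the map is continuous.

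Next, I would establish idempotence and self-adjointness. Idempotence is immediate: if $f\in\mathcal{F}_{n,slice}^{2}$ the coefficients $\alpha_{m,n}^{I}$ are independent of $I\in\wSq$, so the averaging is trivial and $P_{n}f=f$; applying this observation to $P_{n}f\in\mathcal{F}_{n,slice}^{2}$ for an arbitrary $f\in\LgH$ gives $P_{n}^{2}=P_{n}$. Self-adjointness reduces, via Fubini, to the Hermitian symmetry $\mathcal{K}_{n}(q,p)=\overline{\mathcal{K}_{n}(p,q)}$ of the reproducing kernel; the delicate point is to take into account the right-module convention $\overline{ab}=\overline{b}\,\overline{a}$ when computing $\scal{P_{n}f,g}_{\Hq}$, so that the conjugations line up after swapping the order of integration and one obtains $\scal{f,P_{n}g}_{\Hq}$.

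Finally, the integral representation \eqref{OrthProjeq} follows by substituting the closed expression \eqref{ExpRepKer1} for $\mathcal{K}_{n}(\cdot,q)$ into \eqref{OrthProjeqseriesint2}. The step I expect to be the main obstacle is the bookkeeping of the slice star product $\star^{q}_{sp}$ and the order of the factors $e_{*}^{[\overline{q},p]}$ and $L_{n}(|q-p|_{\star^{q}_{sp}}^{2})$ under conjugation, where one must track the non-commutative multiplication in $\Hq$ carefully and verify that reordering is compatible with the $\star^{q}_{sp}$-regularization; once that is dispatched, the stated formula drops out, and the remaining verifications are standard Hilbert-space projection arguments transposed to the right $\Hq$-module setting.
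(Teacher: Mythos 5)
Your proposal is correct and follows essentially the same route as the paper: the series definition \eqref{OrthProjeqseriesint} converges into $\mathcal{F}_{n,slice}^{2}$ thanks to the growth condition \eqref{GrCintAv}, is then rewritten as the kernel pairing \eqref{OrthProjeqseriesint2} via $\scal{H^Q_{m,n},f}_{\Hq}=\pi m!n!\int_{\wSq}\alpha_{m,n}^{I}d\sigma(I)$, and the closed expression \eqref{ExpRepKer1} of $\mathcal{K}_{n}$ yields the integral formula. The paper treats the projection axioms (idempotence and self-adjointness) as implicit consequences of pairing against the reproducing kernel, whereas you verify them explicitly; this is a harmless elaboration, not a different argument.
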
	

\begin{remark} 
	For $n=0$, $P_0$ is the quaternionic Bergman projector on the slice hyperholomorphic Bargmann space in \eqref{SliceHyperBarg}.
\end{remark}

\begin{definition} 
	  We call the transform $P_n$ the $n$-th Bergman projection. 
\end{definition}

 \section{WEIGHTED QUATERNIONIC CAUCHY SINGULAR INTEGRAL}
 
In order to develop the quaternionic analogue of the Cauchy transform, one can consider the kernel function $p\longmapsto (q-p)^{-1}$. However, this function is  in general neither left nor right slice regular, unless $q\in\R$. Instead, one has to consider the one given by the power series representation  
\begin{eqnarray}\label{CauchyKernelexpansion} \mathcal{N}(q,p) = \left\{\begin{array}{ll} 
\displaystyle -\sum\limits_{\ell= 0}^{+\infty}  q^{\ell}p^{-1-\ell}; & 
p\in \overline{B_{|q|}}^c \\
\displaystyle\sum\limits_{\ell= 0}^{+\infty} q^{-1-\ell}p^{\ell} ; & p\in B_{|q|} .
\end{array} \right. 
\end{eqnarray} 
where for given $q\in \Hq$, we have denoted by $B_{|q|}:=\{p\in\Hq; \, |p| <|q|\} $ the ball of radius $|q|$ in $\Hq\equiv \R^4$ and centered at the origin  and by $\overline{B_{|q|}}^c:= \{p\in\Hq; \, |p| >|q|\}$ the complement of its closure.
It  turns out to be an appropriate combination of the $p$-left and the $q$-right non-commutative Cauchy kernel series in the terminology of Colombo and his collaborators (see e.g.  \cite{ColomboGentiliSabadini2010,Gantner2014}).
 More precisely, the closed expression is the one  given by  \eqref{CauchyKernel} or equivalently  \cite[Theorem 2.10]{ColomboGentiliSabadini2010}
$$ \mathcal{N}(q,p)  = \left(q^{2}-2q\Re(p)+|p|^{2}\right)^{-1}(q-\bp)$$ 
for  $q\in \Hq$ and $p\in V_{q}  :=\left\{ p\in \Hq, pq-qp \ne 0\right\}$.
It should be mentioned here that the Cauchy kernel function in \eqref{CauchyKernel} as a function of $p$, can be identified as the regular inverse $(q - p)^{-\star}$ in the slice regular sense  (see \cite{GentiliStoppatoStruppa2013,GentiliStoppato2008}, which is the unique left slice regular extension of $\C_{q} \ni p \longmapsto (q-p)^{-1}$.
 The next uniform estimate is needed for the forthcoming investigation. 
 
 \begin{lemma}
 	For every $0 \leqslant r < 2$, we have	\begin{align}\label{EstimateKernel} \sup _{q\in \Hq }\int _{\Hq}  | \mathcal{N} (q,p) |^r d\mu(p)\leqslant \frac{ \As}{\pi^{r-1}} \Gamma\left( 1-\frac{r}{2}\right),
 		\end{align}
 		where $\Gamma(\cdot)$ is the Euler gamma-function.
 \end{lemma}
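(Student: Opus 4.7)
The plan is to bound $|\mathcal{N}(q,p)|$ pointwise by a simpler slice-type quantity, then integrate out the $S^{2}$-fiber to reduce to a 2D integral, and finally estimate via the Riesz-subordination identity $|z|^{-r}=\Gamma(r/2)^{-1}\int_{0}^{\infty}s^{r/2-1}e^{-s|z|^{2}}ds$. Writing $q=x+I_{q}y$ and $p=u+I_{p}v$ in slice coordinates, a direct computation starting from the closed formula $\mathcal{N}(q,p)=(q^{2}-2q\Re(p)+|p|^{2})^{-1}(q-\bp)$ yields the identity
\[
|\mathcal{N}(q,p)|^{2}=\frac{1+t}{2\rho_{-}}+\frac{1-t}{2\rho_{+}},\qquad
t:=\langle I_{q},I_{p}\rangle,\quad \rho_{\pm}:=(x-u)^{2}+(y\mp v)^{2},
\]
which follows from the easy algebraic relation $(1+t)\rho_{+}+(1-t)\rho_{-}=2|q-\bp|^{2}$ together with the factorisation $|q^{2}-2q\Re(p)+|p|^{2}|^{2}=\rho_{-}\rho_{+}$. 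Since $\rho_{-}\le\rho_{+}$, this gives the pointwise bound $|\mathcal{N}(q,p)|\le 1/|q-p_{-}|$ with $p_{-}:=u+I_{q}v\in\C_{I_{q}}$.

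Next I would parametrise $p$ over $\R\times[0,\infty)\times S^{2}$ with Jacobian $v^{2}$, and use that $p_{-}$ is independent of $I_{p}$: the $I_{p}$-integration then contributes the factor $|S^{2}|=2\As$, yielding
\[
\int_{\Hq}|\mathcal{N}(q,p)|^{r}d\mu(p)\le 2\As\int_{0}^{\infty}\int_{\R}\frac{v^{2}\,e^{-u^{2}-v^{2}}}{((x-u)^{2}+(y-v)^{2})^{r/2}}\,du\,dv.
\]
Applying the subordination identity to the kernel on the right, exchanging Fubini, and evaluating the Gaussian integrals in $u$ and $v$ by completing squares produces an $s$-integral of the shape $\int_{0}^{\infty}s^{r/2-1}(1+s)^{-\alpha}e^{-sR^{2}/(1+s)}P(s,y)\,ds$ with $R=|q|$ and $P$ a short polynomial in $(s,y)$. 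Using $e^{-sR^{2}/(1+s)}\le 1$ and a uniform estimate of $P$, one is left with the Beta identity $\int_{0}^{\infty}s^{r/2-1}(1+s)^{-1}ds=\Gamma(r/2)\Gamma(1-r/2)$; the $\Gamma(r/2)$ cancels the subordination prefactor, while the $\pi$-factors collected from the Gaussians assemble into $\pi^{1-r}$, producing the claimed $\frac{\As}{\pi^{r-1}}\Gamma(1-r/2)$.

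The main obstacle is the $v^{2}$-weight coming from the quaternionic Jacobian: it breaks the rotational symmetry exploited in the complex analogue of \cite{Dostanic2000} and injects $y$-dependent polynomial corrections into the $s$-integrand that must be dominated uniformly in $q$ without inflating the constant (the target $\As/\pi^{r-1}$ is almost sharp). The cleanest way around this is to verify the inequality at $q=0$, where a radial 4D Gaussian computation gives the exact value $\pi^{2}\Gamma(2-r/2)$, and to check that this is already $\le\As\,\pi^{1-r}\Gamma(1-r/2)$ via the elementary estimate $\pi^{r}(1-r/2)\le2$ valid for $r\in[0,2)$; monotonicity in $|q|$, readable from the Kummer-type $\,{}_{1}F_{1}$ representation of the computed integral, then propagates the bound to general $q$.
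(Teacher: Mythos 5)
Your route is genuinely different from the paper's: the paper disposes of the four\-/dimensional integral in three lines by reducing to the slice $\C_{I_q}$ (asserting that $|\mathcal{N}(q,p)|$ does not depend on $I_p$, so that $\mathcal{N}(q,p)=(q-p)^{-1}$ may be assumed) and then quoting the planar Gaussian estimate of \cite{In}, whereas you work directly on $\R^4$ with the Jacobian $v^2$. Your two-pole identity for $|\mathcal{N}|^2$ is structurally correct up to a sign bookkeeping error (with $t=\langle I_q,I_p\rangle$ the weight $(1+t)/2$ must sit on the \emph{near} pole $\bigl((x-u)^2+(y-v)^2\bigr)^{-1}$, so your relation $(1+t)\rho_++(1-t)\rho_-=2|q-\bp|^2$ only holds after swapping $\rho_\pm$), and the resulting pointwise bound $|\mathcal{N}(q,p)|\le |q-p_-|^{-1}$ with $p_-=u+I_qv$ is true. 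The fatal problem is the last step.

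Set $g(q)=\int_0^\infty\int_\R v^2e^{-u^2-v^2}\bigl((x-u)^2+(y-v)^2\bigr)^{-r/2}du\,dv$ for $q=x+I_qy$. This function is not radial in $q$ (it depends on $y$, not merely on $|q|$), and it is \emph{not} maximized at $q=0$: at the origin the weight $v^2$ kills the singularity, but at $q=\mathbf{i}$ the singular point $(u,v)=(0,1)$ sits where $v^2e^{-u^2-v^2}\approx e^{-1}$, so $g(\mathbf{i})\sim 2\pi e^{-1}/(2-r)$ blows up as $r\to 2^-$ while $g(0)=\tfrac{\pi}{4}\Gamma(2-\tfrac r2)$ stays bounded. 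Hence the claimed ``monotonicity in $|q|$'' is false, and the verification at $q=0$ proves nothing about the supremum. Worse, your intermediate bound $2\As\,g(\mathbf{i})\sim 8\pi^2e^{-1}/(2-r)$ already exceeds the target $\As\,\pi^{1-r}\Gamma(1-\tfrac r2)\sim 4/(2-r)$, so no amount of care in the subordination/Beta computation can close this route: discarding the spherical weights $(1\pm t)/2$ in favour of the sup over $t$, and then multiplying by the full sphere area $2\As$, is irretrievably lossy near $r=2$. Any repair must retain the average $\int_{-1}^1\bigl(\tfrac{1+t}{2}\bigr)^{r/2}\tfrac{dt}{2}=\tfrac{2}{r+2}$ before passing to the planar integral, and even then the constant obtained should be checked against the one stated — the $v^2$ Jacobian you rightly identify as the obstruction is precisely what the paper's own slice decomposition passes over in silence, so matching the paper's constant by an honest $\R^4$ computation is not automatic.
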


 \begin{proof} By observing that the modulus
 	$|\mathcal{N}(q, p)|$ is independent of $I_{p}$ and $I_{q}$ for $p=x+I_p y$ and $q=x'+I_qy'$ in $\Hq$, we obtain
 	\begin{align*}
 	\int _{\Hq}  | \mathcal{N} (q,p) |^{r} d\mu(p)
 	&=\int _{I\in\wSq}\int _{\C_I}  | \mathcal{N} (q,p) |^{r} d\mu_{I}(p)d\sigma(I)
 	\\& = \int _{I\in\wSq}\int _{\mathbb{C}_{I_{q}}}  | \mathcal{N} (q,p) |^{r} d\mu_{I_q}(p)d\sigma(I)
 		\\&=
 	\As \int _{\mathbb{C}_{I_{q}}}  | \mathcal{N} (q,p) |^{r} d\mu_{I_q}(p).
 	\end{align*}
 	But since for $p\in \C_{I_q}$, we have $\mathcal{N} (q,p) = (q-p)^{-1}$,  we can conclude for \eqref{EstimateKernel} by appealing to Lemma 3.2 in \cite{In}.
 \end{proof}

 \begin{corollary}
	For the specific case of $r=1$, we get 
	\begin{align}\label{EtimateKernelspecific} \sup _{q\in \Hq }\int _{\Hq}  | \mathcal{N} (q,p) | d\mu(p)\leqslant \sqrt{\pi}  \As .
	\end{align}
\end{corollary}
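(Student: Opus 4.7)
The plan is essentially a one-line specialization of the preceding lemma. I would begin by noting that $r=1$ lies in the admissible range $0 \leq r < 2$, so the uniform bound \eqref{EstimateKernel} applies directly with this value of $r$. Substituting yields
\begin{align*}
\sup_{q\in \Hq}\int_{\Hq} |\mathcal{N}(q,p)|\, d\mu(p) \leq \frac{|\wSq|}{\pi^{1-1}} \, \Gamma\!\left(1-\tfrac{1}{2}\right) = |\wSq|\, \Gamma\!\left(\tfrac{1}{2}\right).
\end{align*}

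The second step is to invoke the classical value $\Gamma(1/2) = \sqrt{\pi}$ of the Euler gamma function, which immediately rewrites the right-hand side as $\sqrt{\pi}\, |\wSq|$, giving the stated inequality \eqref{EtimateKernelspecific}.

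Since this is a direct corollary, I do not anticipate any real obstacle; the only thing worth verifying is that $r=1$ does belong to the half-open interval $[0,2)$ for which the parent lemma is valid, and that $\Gamma$ is finite at $1/2$. Both facts are standard, so no separate case analysis is required.
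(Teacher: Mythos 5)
Your proof is correct and is exactly the intended argument: the paper states this as an immediate corollary of the preceding lemma, obtained by setting $r=1$ in \eqref{EstimateKernel} and using $\Gamma(1/2)=\sqrt{\pi}$. Nothing further is needed.
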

 
 Associated to the Schwartz kernel $\mathcal{N}(p,q)$, there are several ways to extend the definition of weighted Cauchy transform in \eqref{CT} to acts on the right quaternionic Hilbert space $\LgH$. Notice for instance that  
  $\mathcal{C}^{I}_{\Hq}f(q)$, as well as $\mathcal{C}^{S}_{\Hq}f(q)$ below, are defined by means of slices, i.e., by restricting the occurred integral to a fixed slice. For arbitrary $q\in \Hq$ and given fixed $I\in \Sq$ (independently of $q$), we set
  \begin{equation}\label{18I}
 \mathcal{C}^I_{\Hq}f(q):=\dfrac{1}{\pi}\int_{\C_{I}} \mathcal{N}(q,p) f|_{\C_I}(p) d\mu_I(p).
 \end{equation}

 \begin{remark}\label{rem}
 	Another kind of quaternionic weighted Cauchy transform, extending \eqref{CT}, involves dynamic slice depending in $q$, to wit 
 	\begin{equation}\label{18SS}
 		\displaystyle
 		\mathcal{C}^S_{\Hq}f(q):=\dfrac{1}{\pi} 
 		\left\{ \begin{array}{ll}
 			\displaystyle \int_{\C_{I_q}} \mathcal{N}(q,p) f(p)  d\mu_I(p); & \,\, q\in \Hq\setminus \R, \\
 			\displaystyle\int_{\R} f(t) \mathcal{N}(t,x)e^{-t^{2}}dt;& \,\, q=x\in \R.
 		\end{array} \right.
 	\end{equation}
 The spectral properties of this transform is quite similar to the one considered in the complex setting \cite{Dostanic2000,In}, and the restriction of $ \mathcal{C}^{\mathbf{i}}_{\Hq}f|_{\C}$ to the complex plane $\C=\C_{\mathbf{i}}$ with $f$ being complex valued function on $\Hq$ reduces also to \eqref{CT}.  
\end{remark}

\begin{remark}\label{rem2} 
The position of the kernel function $\mathcal{N}(q,p)$ in the left hand-side of the test function $f$ must be taken into account in order to get the linearity of the constructed integral transform when acting on right vector spaces. Otherwise, we have to regularize the product  $f(p) \mathcal{N}(p,q)$ in the integrand using the $\star$-product instead of the standard dot product. 
 This follows a general scheme for constructing linear quaternionic analogs of the classical integral transforms and to overcome technical difficulties for the lack of commutativity.
\end{remark}

According to the last remark, we can suggest 

\begin{equation}\label{18Cqta}
\Ch^{\star} f(q):=\dfrac{1}{\pi}\int_{\Hq} f(p){\, {\star}^{q,p}_{sp} \,} \mathcal{N}(q,p) d\mu(p),
\end{equation}
where the ${\star}^{q,p}_{sp}$-product is defined for given 
$\displaystyle f(q,p) = \sum_{m,n=0}^\infty q^mp^n a_{m,n} $ and
 $\displaystyle g(q,p) = \sum_{m,n=0}^\infty q^mp^n b_{m,n}$
by 
$$ (f {\star}^{q,p}_{sp} g) (q,p)=   \sum_{\ell,s=0}^\infty q^\ell p^s \left( \sum_{m+n=\ell}^\infty 
\sum_{j+k=s}^\infty  a_{m,n} b_{j,k}\right)  
= (f {\star}^{q}_{sp}{\star}^{p}_{sp} g ) (q,p).$$
One can also consider the singular integral operator defined by
\begin{equation}\label{18Cqtb}
\Ch f(q):=\dfrac{1}{\pi}\int_{\Hq} \mathcal{N}(q,p) f(p)  d\mu(p) . 
\end{equation}
The following result shows that the Cauchy transform $\Ch$ can be seen as the averaging operator of the $I$-slice Cauchy transform $\Ch^{I}$ over the hemisphere $\wSq$ of purely imaginary quaternions.

\begin{lemma} \label{lemeq}
	We have 
	$$ \Ch f(q) 
	= \int_{\wSq} \Ch^{I} f(q) d\sigma(I).
	$$ 
\end{lemma}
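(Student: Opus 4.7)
The strategy is a direct unfolding of the definitions followed by the slice decomposition of the measure on $\Hq$. Substituting the definition \eqref{18I} of $\Ch^{I} f$ into the right-hand side gives
\begin{equation*}
\int_{\wSq} \Ch^{I} f(q)\, d\sigma(I) = \frac{1}{\pi} \int_{\wSq} \left( \int_{\C_I} \mathcal{N}(q,p) f(p)\, d\mu_I(p) \right) d\sigma(I).
\end{equation*}
I would then invoke Fubini's theorem together with the slice decomposition identity
\begin{equation*}
\int_{\wSq} \int_{\C_I} G(p)\, d\mu_I(p)\, d\sigma(I) = \int_{\Hq} G(p)\, d\mu(p),
\end{equation*}
which is exactly the identity already used at the beginning of the proof of \eqref{EstimateKernel} and which reflects the set-theoretic slicing $\Hq = \bigcup_{I\in\wSq} \C_I$. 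Applying it with $G(p) = \mathcal{N}(q,p) f(p)$ collapses the iterated integral into $\frac{1}{\pi}\int_{\Hq} \mathcal{N}(q,p) f(p)\, d\mu(p) = \Ch f(q)$, which is the desired equality.

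The only delicate point is the justification of Fubini's theorem, which requires $\mathcal{N}(q,\cdot)\, f \in L^{1}(\Hq, d\mu)$ for $f \in \LgH$. For fixed $q\in \Hq$, the corollary \eqref{EtimateKernelspecific} yields $\mathcal{N}(q,\cdot) \in L^{1}(\Hq, d\mu)$, and combining this with the $L^{r}$-bound \eqref{EstimateKernel} for $r \in (1,2)$ via H\"older's inequality (with conjugate exponent $r' > 2$) provides integrability of the product after restricting $f$ to a dense subclass of $\LgH$, for instance bounded functions or finite linear combinations of the orthogonal basis $H^Q_{m,n}$; the statement for general $f \in \LgH$ then follows by a density extension using continuity of both $\Ch$ and the averaging map $I \mapsto \Ch^{I} f$ on this subclass. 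This integrability verification is the main technical obstacle; once it is in place, the algebraic content of the lemma, namely the interchange of integrals and the merging of the slice measure, is immediate.
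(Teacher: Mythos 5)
Your proposal is correct and follows essentially the same route as the paper, whose entire proof is the remark that the identity is ``immediate by definition of the integral on $\Hq$'' --- that is, precisely the slice decomposition $\int_{\Hq} G\, d\mu = \int_{\wSq}\int_{\C_I} G\, d\mu_I\, d\sigma(I)$ that you invoke. Your additional care in justifying the interchange of integrals via the kernel estimates is a reasonable elaboration of a point the paper leaves implicit, but it does not change the argument.
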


\begin{proof} 
	The identity is immediate by definition of the integral on $\Hq$.  
\end{proof}

Accordingly,  the uniform boundedness	of the operators  $\Ch^I$ restricted to $\mathcal{F}_{n,slice}^{2}$, when $I\in \wSq$, is a sufficient condition for the boundedness of $\Ch$ on $\mathcal{F}_{n,slice}^{2}$. In fact, by applying the Cauchy-Schwartz inequality, we get 	
$$ 
\norm{\Ch f}_{\Hq}^2= O   \left( \sup_{I\in \wSq}    \norm{\Ch^I}^2  \norm{f}_{\C_I}^2\right) .$$
Now, since $ \norm{f}_{\C_I}^2$ is independent of $I$ when $f\in \mathcal{F}_{n,slice}^{2}$, we get $\norm{f}_{\C_I}^2= O (\norm{f}_{\Hq}^2) $ and therefore
$$ 
\norm{\Ch f}_{\Hq}^2= O  \left( \sup_{I\in \Sq}    \norm{\Ch^I}^2  \right) \norm{f}_{\Hq}^2.$$
Clearly, this argument is also valid for $\Ch$ acting on $\Hnew$.
We note also that the boundedness of $\Ch$ can also be deduced by applying the Schur sufficiency condition  for boundedness of integral operator with Schwartz kernel. Below we  present a variant proof. To this purpose, we need to recall the Russo's Lemma \cite{russo}, for compactness and membership in $k$-Schatten class of integral operator with Schwartz kernel.

	\begin{lemma}[Russo's Lemma \cite{russo}] \label{lemRusso}
	An integral operator of the form 
	$$Tf(y) = \int _{X} \mathcal{S} (y,x)f(x)d\nu(x),$$
	for given finite measure $\nu$ on a given no empty set $X$, belongs to the $k$-Schatten class as an operator on $L^2(X,d\nu)$, if its kernel function satisfies 
	$$\int _{\Hq} \left( \int _{\Hq}|\mathcal{S} (q,p)|^{r}d\nu(p)\right)^{k/r} d\nu(q) \,\, 	\mbox{and} 	\,\, 
	\int _{\Hq} \left( \int _{\Hq}|\mathcal{S} (q,p)|^{r}d\nu(q)\right)^{k/r} d\nu(p) $$
	are finite for  $1 \leq r < 2$ which is the exponent conjugate  of $k$,  $1/k + 1/r = 1$.
\end{lemma}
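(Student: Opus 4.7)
The plan is to derive Russo's inequality by complex interpolation on the scale of Schatten ideals, interpolating between the Hilbert--Schmidt endpoint $(r,k)=(2,2)$ and the bounded-operator endpoint $(r,k)=(1,\infty)$, the two extreme cases compatible with the conjugacy $1/r+1/k=1$. At $r=k=2$ the hypothesis reduces to $\int_X\!\int_X|\mathcal{S}(y,x)|^2\,d\nu(x)\,d\nu(y)<\infty$, which is the Hilbert--Schmidt criterion and yields $T\in S_2$ with $\|T\|_{S_2}^2$ equal to this double integral. At the formal endpoint $r=1$, $k=\infty$ the hypothesis becomes $A:=\sup_y\int_X|\mathcal{S}(y,x)|\,d\nu(x)<\infty$ and $B:=\sup_x\int_X|\mathcal{S}(y,x)|\,d\nu(y)<\infty$, and Schur's test provides the operator-norm bound $\|T\|_{\mathrm{op}}\leq\sqrt{AB}$. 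These are the two edge estimates that will feed the interpolation machine.

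Next I would introduce an analytic family $\{T_z\}$ on the strip $0\leq\mathrm{Re}\,z\leq 1$ with kernel $\mathcal{S}_z(y,x):=|\mathcal{S}(y,x)|^{\alpha(z)}\,\mathrm{sgn}(\mathcal{S}(y,x))$, where $\alpha$ is affine in $z$ and normalised so that $\alpha(\theta)=1$ at the target parameter $\theta=(k-2)/k$, equivalently $k=2/(1-\theta)$ and $r=2/(1+\theta)$. The exponent $\alpha$ is calibrated so that on the left edge $\mathrm{Re}\,z=0$ one has $|\mathcal{S}_z|^2=|\mathcal{S}|^r$, giving Hilbert--Schmidt bounds on $T_{it}$ that are uniform in $t\in\mathbb{R}$ and controlled by the first mixed-norm hypothesis, while on the right edge $\mathrm{Re}\,z=1$ the iterated integrals of $|\mathcal{S}_z|$ in either variable are controlled by the second mixed-norm hypothesis, so that Schur's test yields uniform operator-norm bounds on $T_{1+it}$. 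Stein's complex interpolation theorem for the Schatten scale (or equivalently the Calder\'on--Lions interpolation of the $S_p$ ideals) then produces $T=T_\theta\in S_k$ with a norm bound expressed as the geometric mean of the two mixed-norm quantities appearing in the hypothesis; compactness of $T$ follows automatically since $S_k\subset S_\infty$ for every finite $k>2$.

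The main obstacle will be the calibration of the affine exponent $\alpha(z)$: it must be chosen so that the two edge estimates involve exactly the two integrals given in the hypothesis, while preserving $\alpha(\theta)=1$. Beyond this bookkeeping of the exponents $r,k,\theta$, the argument reduces to verifying the admissibility hypotheses of Stein's theorem, namely analyticity of $z\mapsto\langle T_zf,g\rangle$ on the open strip with admissible growth on its boundary for a dense class of $f,g\in L^2(X,d\nu)$. This verification is routine once one writes $|\mathcal{S}_z|=\exp(\mathrm{Re}\,\alpha(z)\,\log|\mathcal{S}|)$ on the support of $\mathcal{S}$ and uses finiteness of $\nu$ together with a dominated-convergence argument on compactly supported step functions.
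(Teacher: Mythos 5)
The paper does not actually prove this statement: it is quoted as Russo's theorem and referred to \cite{russo}, so your proposal is being measured against the literature rather than against an argument in the text. Your overall strategy --- Stein complex interpolation on the Schatten scale between the Hilbert--Schmidt endpoint $(r,k)=(2,2)$ and the Schur-test endpoint $(r,k)=(1,\infty)$ --- is indeed the mechanism behind Russo's proof, and both edge estimates you quote are correct as stated at those endpoints.

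There is, however, a genuine gap in the analytic family itself. Your calibration ($\alpha$ affine, $\alpha(\theta)=1$, and $|\mathcal{S}_z|^2=|\mathcal{S}|^r$ on $\Re z=0$) forces $\Re\alpha(0)=r/2$, and a short computation with $\theta=(k-2)/k$, $r=k/(k-1)$ gives $\Re\alpha(1)=r$. Hence on the edge $\Re z=1$ the Schur test would require $\sup_q\int_X|\mathcal{S}(q,p)|^r\,d\nu(p)<\infty$ together with its transpose; but the hypothesis only says that $q\mapsto\int_X|\mathcal{S}(q,p)|^r\,d\nu(p)$ lies in $L^{k/r}(d\nu)$, and a finite integral does not bound an essential supremum. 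No constant-exponent affine family can repair this: choosing $\Re\alpha(1)=0$ instead (so that the right edge is trivially Schur-bounded on the finite measure space) forces $\Re\alpha(0)=k/2$, and the left edge would then need $\mathcal{S}\in L^k(X\times X)$, which the mixed-norm hypothesis also does not provide. The missing device is the standard Riesz--Thorin/Stein normalization: the kernel of $T_z$ must carry variable-dependent weights, e.g.\ factors $u(q)^{\beta(z)}$ and $v(p)^{\gamma(z)}$ built from the row and column masses $\bigl(\int_X|\mathcal{S}(q,p)|^r\,d\nu(p)\bigr)^{1/r}$ and $\bigl(\int_X|\mathcal{S}(q,p)|^r\,d\nu(q)\bigr)^{1/r}$, calibrated so that the Schur sums on $\Re z=1$ are at most $1$, the Hilbert--Schmidt norm on $\Re z=0$ is controlled by the two stated integrals, and the weights collapse to the final constant at $z=\theta$. (Equivalently, one interpolates the map $\mathcal{S}\mapsto T$ between $L^{(2,2)}\to S_2$ and the Schur class $\to\mathcal{B}(L^2)$ and identifies the intermediate mixed-norm space; the same weights reappear in that identification.) This normalization is the substantive step of Russo's argument and is absent from your outline. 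Your left-edge claim, by contrast, is fine: finiteness of $\nu$ and H\"older with exponent $k/r>1$ do yield $\int_X\int_X|\mathcal{S}|^r\,d\nu\,d\nu<\infty$.
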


\begin{theorem}
	The weighted Cauchy transform $\Ch$ is a bounded and compact linear operator from $\LgH$ to $\LgH$ with norm operator satisfying  $\norm{\Ch} \leqslant  {\As}/{\sqrt{\pi}} .$ 
	Moreover, for every $k > 2$ it belongs to the $k$-Schatten ideal of bounded operators $S_{k}(L^{2}_{\mu}(\Hq))$.
\end{theorem}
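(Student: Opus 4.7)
The strategy rests on three ingredients: a Schur-type estimate for the operator-norm bound, Russo's criterion (Lemma \ref{lemRusso}) for the $k$-Schatten membership, and the inclusion $S_k(\LgH)\subset \mathcal{K}(\LgH)$ to deduce compactness at no extra cost.

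For boundedness, I would apply Cauchy--Schwarz pointwise to the factorisation $|\mathcal{N}(q,p)|\,|f(p)|=|\mathcal{N}(q,p)|^{1/2}\cdot |\mathcal{N}(q,p)|^{1/2}|f(p)|$, which yields
$$|\Ch f(q)|^{2}\leq \frac{1}{\pi^{2}}\left(\int_{\Hq}|\mathcal{N}(q,p)|\,d\mu(p)\right)\left(\int_{\Hq}|\mathcal{N}(q,p)|\,|f(p)|^{2}\,d\mu(p)\right).$$
The first parenthesis is bounded by $\sqrt{\pi}\,\As$ thanks to the estimate \eqref{EtimateKernelspecific}. Integrating over $q$, applying Fubini, and bounding the remaining inner integral $\int_{\Hq}|\mathcal{N}(q,p)|d\mu(q)$ uniformly in $p$ produces $\norm{\Ch f}_{\Hq}^{2}\leq (\As^{2}/\pi)\norm{f}_{\Hq}^{2}$. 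The uniform bound in $p$ is obtained by the very same slice-integration argument that is used in the proof of \eqref{EstimateKernel}: the modulus $|\mathcal{N}(q,p)|$ is invariant under independent rotations of the imaginary units $I_{q}$ and $I_{p}$, so restricting $q$ to the slice $\C_{I_{p}}$ gives $\mathcal{N}(q,p)=(q-p)^{-1}$, and the computation reduces to the classical estimate on $\C_{I_p}$ treated in \cite{In}.

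For the Schatten property, fix $k>2$ and let $r=k/(k-1)\in(1,2)$, the conjugate exponent. I would apply Russo's lemma to the kernel $\mathcal{N}(q,p)/\pi$. By \eqref{EstimateKernel} there exists a finite $C_{r}$ with $\int_{\Hq}|\mathcal{N}(q,p)|^{r}d\mu(p)\leq C_{r}$ uniformly in $q$, and the analogous bound in $p$ follows from the variable-swap reasoning outlined above. Since the Gaussian measure is finite on $\Hq\simeq\R^{4}$, with $\mu(\Hq)=\pi^{2}$, both iterated integrals in Lemma \ref{lemRusso} are dominated by $C_{r}^{k/r}\pi^{2}<\infty$, so $\Ch\in S_{k}(\LgH)$. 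Compactness is then automatic, as $S_{k}$ is contained in the ideal of compact operators for every finite $k$. The only genuine technical obstacle throughout is the verification that \eqref{EstimateKernel} holds after swapping the roles of $q$ and $p$: the closed form $\mathcal{N}(q,p)=(q^{2}-2q\Re(p)+|p|^{2})^{-1}(q-\bp)$ is not symmetric in its two arguments, so the symmetry must be recovered at the level of the modulus via the slice identification rather than read off the formula directly.
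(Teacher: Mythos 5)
Your proposal is correct and follows essentially the same route as the paper: the pointwise Cauchy--Schwarz splitting of $|\mathcal{N}(q,p)|\,|f(p)|$ combined with the uniform kernel estimate \eqref{EstimateKernel} for the norm bound, and Russo's lemma with conjugate exponents $1\le r<2$, $k>2$ for the Schatten membership (hence compactness). The only point you treat more carefully than the paper is the justification of the symmetric estimate in $p$ via the slice identification, where the paper simply invokes $|\mathcal{N}(q,p)|=|\mathcal{N}(p,q)|$.
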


\begin{proof}
	The Cauchy-Schwartz inequality and Lemma \ref{EstimateKernel} imply 
	\begin{align*}
	|\mathcal{C}_{\Hq}(f)(q)|^{2}
	&\leqslant\frac{1}{\pi^{2}}\left(  \int_{\Hq}|\mathcal{N}(q, p)|d\mu(p)\right)  \left( \int_{\Hq}|\mathcal{N}(q, p)|| f(p)|^{2}d\mu(p)\right) \\&\leqslant \frac{\As}{\pi^{3/2}} \int_{\Hq}|\mathcal{N}(q, p)| | f(p)|^{2}d\mu(p) .
	\end{align*} 
Therefore, 
	\begin{align*}
	\norm{\Ch(f)}_{\Hq}^2
	&\leqslant \frac{\As}{\pi^{3/2}}\int _{\Hq}  \int_{\Hq}|\mathcal{N}(q, p)|| f(p)|^{2}d\mu(p) d\mu(q)\\
	&\leqslant \frac{\As}{\pi^{3/2}} \int _{\Hq}   |f(p)|^{2} \left( \int_{\Hq}|\mathcal{N}(q, p)|d\mu(q)\right) d\mu(p) \\&\leqslant \frac{\As}{\pi^{3/2}}  \left( \sup _{p\in \Hq } \int_{\Hq}|\mathcal{N}(q, p)|d\mu(q) \right) \norm{f}_{\Hq}^2.
	\end{align*}
	Finally, by Lemma \ref{EstimateKernel} again, we get
	$$
	\norm{\Ch(f)}_{\Hq}^2
\leqslant \frac{(\As)^2}{\pi }   \norm{f}_{\Hq}^2 .
	$$
	The compactness of $\Ch$ follows in a similar way as in \cite{In} for the complex Cauchy transform \eqref{CT}.
	Indeed,  using the estimation provided by Lemma \ref{EstimateKernel} and the fact that $d\mu$ is a finite  measure on $\Hq$, we get
	\begin{align*}
	\int _{\Hq} \left( \int _{\Hq}  | \mathcal{N} (q,p) |^{r} d\mu(p)\right) ^{k/r}d\mu(q)
	&\leqslant 
\frac{\pi (\As )^2}{2} \left( \frac{ \Gamma\left( 1-\frac{r}{2}  \right)}{\pi^{r-1}}\right)  ^{k/r} 
	\end{align*}
	for every $1 \leq r < 2$, where $k$ is its exponent conjugate $1/k + 1/r = 1$.
	Thus,  since  $| \mathcal{N}(q,p)|=| \mathcal{N}(p,q)|$, the two requirements in Russo's lemma (Lemma  \ref{lemRusso}) are satisfied.
\end{proof}

	\begin{remark} 		
		The compactness of $\Ch$ can also be studied by performing the operator $\Ch^*\Ch$ and proceed as in \cite{ArazyKhavinson1992}, where $\Ch^*$ denotes the adjoint of $\Ch$ defined as operator from $\LgH$ into $\LgH$  by 
		$$ \Ch^*(g)(p) =\frac{1}{\pi} \int_{\Hq} \overline{\mathcal{N}(q,p)} g(q) d\mu(q)= -\frac{1}{\pi} \int_{\Hq} \mathcal{N}(\bp,\bq) g(q) d\mu(q).$$
\end{remark}

The theorem just proved and the upcoming ones remain valid for the Cauchy transform  $\Ch^{\star}$ in \eqref{18Cqta}. This readily follows by observing that $\Ch^{\star} $   
coincides with $\Ch$ in \eqref{18Cqtb} (see  Corollary \ref{coreq} below). For the proof, we use the following result giving the explicit expression for $\Ch $ and $\Ch^{\star} $ on the generic elements $e_{m,n}(q,\overline{q}) := q^{m} \overline{q}^n$. 
 For exact statement, we let $\varepsilon_k$ stands for 
 $$ \varepsilon_k= \left\{ \begin{array}{ll} 
 1 & \mbox{if } \, k \geq 0, \\
 0 & \mbox{if } \, k<0.  \end{array} \right.   $$

 \begin{proposition}\label{propactionemn} 
 	For every nonnegative integers $m,n$ and $q\in\Hq$, we have $\Ch^{\star} (e_{m,n})    = \Ch (e_{m,n})$ and
 	\begin{align}\label{actiongeneric2}
 	\left[\Ch^{\star} (e_{m,n})\right](q) &=   \As\left( 
 	q^{m} H^Q_{-1,n}(q,\bq) e^{-|q|^2} - n!\varepsilon_{m-n-1} q^{m-n-1}\right) . 
 	\end{align}
 \end{proposition}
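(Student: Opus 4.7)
The plan is to establish the two assertions of the proposition in sequence. For the equality $\Ch^{\star}(e_{m,n}) = \Ch(e_{m,n})$, I would expand the kernel $\mathcal{N}(q,p)$ via the series \eqref{CauchyKernelexpansion}; since the coefficients appearing in both series are real scalars ($\pm 1$), and the factor $\bp^n$ lies in $\C_{I_p}$ and hence commutes with every $p^k$, the $\star^{q,p}_{sp}$-regularization $p^m\bp^n \star^{q,p}_{sp}\mathcal{N}(q,p)$ unfolds monomial-by-monomial to the ordinary product $\mathcal{N}(q,p)\,p^m\bp^n$, so integration yields the asserted identity.

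For the closed-form expression I would reduce to the slice via Lemma \ref{lemeq}:
\[\Ch(e_{m,n})(q) = \int_{\wSq} \Ch^I(e_{m,n})(q)\,d\sigma(I),\]
and compute $\Ch^I(e_{m,n})(q) = \pi^{-1}\int_{\C_I}\mathcal{N}(q,p)\,p^m\bp^n\,d\mu_I(p)$ by splitting the domain at $|p|=|q|$ and substituting the two series of \eqref{CauchyKernelexpansion}. The slice orthogonality
\[\int_{\C_I\cap\{|p|<R\}} p^a\bp^b\,d\mu_I(p) = \pi\,\gamma(a+1,R^2)\,\delta_{a,b},\]
together with its upper-Gamma analog on the complement, collapses each of the two infinite sums to a single surviving term: $\ell = n-m$ in the inner sum (contributing if and only if $n\geq m$) and $\ell = m-n-1$ in the outer sum (contributing if and only if $m\geq n+1$). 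The resulting slice transform is manifestly independent of $I$, so the $\wSq$-integration contributes only the factor $\As$, leaving an expression proportional to $q^{m-n-1}$ and built from $\gamma(n+1,|q|^2)$ and $\Gamma(n+1,|q|^2)$.

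To recognize this in the claimed form, I would apply the hypergeometric identity
\[{}_1F_1(1;n+2;|q|^2) = \frac{(n+1)!}{|q|^{2(n+1)}}\Bigl(e^{|q|^2} - \sum_{k=0}^n \tfrac{|q|^{2k}}{k!}\Bigr)\]
together with the algebraic reduction $q^m\bq^{n+1} = q^{m-n-1}|q|^{2(n+1)}$, valid for every integer $m$ once negative powers of $q$ are interpreted via $\bq = |q|^2 q^{-1}$. Substitution into the formula \eqref{16moins1} for $H^Q_{-1,n}$ identifies the $\gamma$-contribution with $q^m H^Q_{-1,n}(q,\bq)\,e^{-|q|^2}$, while the identity $\Gamma(n+1,|q|^2) = n! - \gamma(n+1,|q|^2)$ accounts for the correction term $-n!\,\varepsilon_{m-n-1}\,q^{m-n-1}$. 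The main technical obstacle is the case bookkeeping ($m\leq n$ versus $m\geq n+1$), neatly packaged by the indicator $\varepsilon_{m-n-1}$; the analytic interchange of sum and integral is immediate because on each of the two regions the series collapses to a single nonzero term.
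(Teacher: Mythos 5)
Your proposal is correct and follows essentially the same route as the paper: expand $\mathcal{N}(q,p)$ by the two-regime series \eqref{CauchyKernelexpansion}, split the integral at $|p|=|q|$, use angular orthogonality to collapse each sum to the single surviving index ($\ell=n-m$, resp.\ $\ell=m-n-1$), and identify the resulting incomplete gamma integrals with $q^{m}H^Q_{-1,n}(q,\bq)e^{-|q|^2}$ via the ${}_1F_1$ representation \eqref{16moins1}; your detour through Lemma \ref{lemeq} is just the paper's polar-coordinate integration over $\wSq$ written slicewise, and your closed form for ${}_1F_1(1;n+2;\cdot)$ is equivalent to the paper's use of \eqref{22}. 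The only (minor) added value is that you spell out why the $\star$-regularized and plain products coincide on $e_{m,n}$, which the paper dismisses as "a similar computation."
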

 
 \begin{proof}
 	We need only to prove the  identity \eqref{actiongeneric2}, since the computation of $\Ch (e_{m,n})$ can be handled in a similar way to give rise to the right hand-side in \eqref{actiongeneric2}.  
 	To this end, we keep notation of the ball of radius $B_{|q|}$, the complement of its closure $\overline{B_{|q|}}^c$ and the hemisphere $\wSq$ as above.  
 	Thus, making use of the expansion series \eqref{CauchyKernelexpansion} of the Cauchy kernel $\mathcal{N}$,
	we can rewrite \eqref{18Cqta} as 
 	\begin{align}
 	\left[\Ch^{\star} (e_{m,n})\right](q) 
  	& 	=\dfrac{1}{\pi}\left( \int_{B_{|q|}} p^m\bp^n  {\, {\star}^q {\star}^p \,} \mathcal{N}(q,p) d\mu(p) + \int_{\overline{B_{|q|}}^c} p^m\bp^n  {\, {\star}^q {\star}^p \,} \mathcal{N}(q,p) d\mu(p) \right)\nonumber \\
 	&= \dfrac{1}{\pi}\sum\limits_{\ell= 0}^{+\infty}\left( q^{-1-\ell} \int_{B_{|q|}} p^{m+\ell} \overline{p}^{n}d\mu(p)
 		- q^{\ell} \int_{\overline{B_{|q|}}^c} p^{m-1-\ell}\overline{p}^{n}   d\mu(p)\right) \nonumber
 	\\	&=\dfrac{1}{\pi}\sum\limits_{\ell= 0}^{+\infty} 
 	q^{-1-\ell}\int_{|q|}^{\infty}  \int_{ \wSq} \int_0^{2\pi} r^{m+\ell+n} e^{I(m+\ell-n)\theta} e^{-r^{2}} rdrd\sigma(I)d\theta  \nonumber\\ & \qquad - \dfrac{1}{\pi}\sum\limits_{\ell= 0}^{+\infty} q^{\ell} \int_0^{|q|}   \int_{ \wSq} \int_0^{2\pi}
 	r^{m-1-\ell+n} e^{I(m-1-\ell-n)\theta} e^{-r^{2}} rdrd\sigma(I) d\theta 
 	\nonumber \\
 	&= \As q^{m-n-1} \left( \varepsilon_{n-m}  \int_0^{|q|^2}
 	t^{n}  e^{-t} dt   - 
 	\varepsilon_{m-n-1}   \int_{|q|^2}^{\infty} t^{n}  e^{-t} dt   \right)  . 
 	\label{actiongeneric} 
 	\end{align}
 	Here we have used the polar coordinates $q=re^{I\theta}$ with  $r > 0$, $\theta \in [0,2\pi]$, $I$ in the hemisphere $\wSq$, and next the change of variable $t=r^2$. 
 	Above $dr$ and $d\theta$ are the Lebesgue measure on positive real line and unit circle, respectively. 
 	Therefore, keeping in mind the expression of $H^Q_{m-1,n}(q,\bq)$ in \eqref{16moins1}, 
 	we can check \eqref{actiongeneric2} by observing that 
 		\begin{align*}
 	\psi_n(|q|^2) &:= \int_0^{|q|^2} t^{n}  e^{-t} dt =  n! - \int_{|q|^2}^{\infty}
 	t^{n}  e^{-t}, 
 	\end{align*}
 	and using 	 $\varepsilon_{n-m}  + \varepsilon_{m-n-1}=1$,
 as well as
 	\begin{align}
 	\psi_n(|q|^2)  	&= \frac{|q|^{2(n+1)}}{n+1} {_1F_1}\left( \begin{array}{c} 1 \\ n+2 \end{array}\bigg | |q|^2 \right) e^{-|q|^2} . \label{factoriel}
 	\end{align}
 	The last equality follows thanks to the integral representation of confluent hypergeometric function \cite[p. 275]{MagnusOberhettingerSoni1966}
 	\begin{equation}\label{22}
 	{_1F_1}\left( \begin{array}{c} a \\ c \end{array}\bigg | x \right)=\frac{\Gamma(c)}{\Gamma(a)\Gamma(c-a)} e^z \int_{0}^{1}e^{-zt}t^{c-a-1} (1-t)^{a-1} dt
 	\end{equation}
 	with $a=1$ and $c=n+2$.	
\end{proof}

 \begin{remark}
 	The result of Proposition \eqref{propactionemn}
 	generalizes the one obtained in \cite{AnderssonHinakkenen1989} for the standard unweighted complex Cauchy transform on the monomials $z^m\bz^n$.
 \end{remark}

\begin{corollary} \label{coreq}
	We have 
	$\Ch^{\star}  = \Ch $.
\end{corollary}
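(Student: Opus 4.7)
My plan is to derive the corollary directly from Proposition \ref{propactionemn} by a standard density-plus-boundedness argument, treating the monomials $e_{m,n}(q,\bq)=q^m\bq^n$ as the test class on which both operators have already been explicitly computed.

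First, I would record that Proposition \ref{propactionemn} already delivers the pointwise identity $[\Ch^{\star}(e_{m,n})](q) = [\Ch(e_{m,n})](q)$ for every $q\in\Hq$ and every pair of nonnegative integers $m,n$. Both $\Ch^{\star}$ and $\Ch$ are $\Hq$-right linear by construction (the kernel sits to the left of $f$ in \eqref{18Cqtb}, and the $\star$-regularized product in \eqref{18Cqta} is designed precisely to preserve right linearity, cf.\ Remark \ref{rem2}). Consequently the equality extends by linearity to every finite right-linear combination $P(q,\bq) = \sum_{m,n} e_{m,n}(q,\bq)\,c_{m,n}$, that is, to every quaternionic polynomial in $q$ and $\bq$. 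In particular, in view of the explicit expansion \eqref{chp}, we obtain $\Ch^{\star}(H^Q_{m,n}) = \Ch(H^Q_{m,n})$ for all $m,n\geq 0$.

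Next, I would invoke the fact recalled in Section 2 that the rescaled It\^o--Hermite system $\{\varphi^Q_{m,n}\}_{m,n\geq 0}$ is a \emph{complete} orthonormal system in $\LgH$. Hence the $\Hq$-right linear span of the polynomials $\{H^Q_{m,n}\}$, which coincides with the $\Hq$-right linear span of the monomials $\{e_{m,n}\}$, is dense in $\LgH$.

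Finally, I would close the argument by continuity. The preceding Theorem establishes that $\Ch$ is bounded on $\LgH$; the very same Cauchy--Schwarz/kernel-estimate proof based on \eqref{EstimateKernel} applies verbatim to $\Ch^{\star}$ once one observes that $|f(p)\star^{q,p}_{sp}\mathcal{N}(q,p)|$ is dominated by $|\mathcal{N}(q,p)|\,|f(p)|$ after integration against $d\mu$, so $\Ch^{\star}$ is also a bounded operator on $\LgH$. Two bounded $\Hq$-right linear operators that agree on a dense subspace agree everywhere, which yields $\Ch^{\star} = \Ch$ on all of $\LgH$. I do not anticipate a real obstacle here; the only mild subtlety is being careful that the density and the linearity arguments are carried out on the right, consistent with the right $\Hq$-module structure of $\LgH$.
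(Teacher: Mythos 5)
Your argument is exactly the paper's: the authors also deduce the corollary from Proposition \ref{propactionemn} together with the density of the monomials $e_{m,n}(q)=q^m\bq^n$ in $\LgH$. You merely make explicit the boundedness/continuity step that the paper leaves implicit, which is a reasonable and correct elaboration of the same proof.
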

  
  \begin{proof} 
    This readily follows form Proposition \ref{propactionemn} since the polynomials 
    $e_{m,n}(q) = q^m\bq^n$ are dense in $\LgH$.     
  \end{proof}

 In order to provide concrete description of basic properties of $\Ch $, we need to its explicit expression when acting on the quaternionic It\^o--Hermite polynomials $H^Q_{m,n}(q,\overline{q} )$. From now on, we normalize the area measure on the hemisphere $\wSq$ such that $\As=1$. 
 
\begin{theorem}\label{thmM1thm23} 
	The explicit action of $\Ch $ on the orthonormal basis $$\varphi^Q_{m,n}=(\pi m!n!)^{-1/2}H^Q_{m,n},$$ for every $m>0$, is given by 
		\begin{equation}\label{actionCHQ1}
	\left[\Ch  \varphi^Q_{m,n}\right](q)=  -\frac{e^{-|q|^{2}}}{\sqrt{m}} \varphi^Q_{m-1,n}(q,\bq),
	\end{equation}
while for $m=0$, we have
	$$ \left[\Ch  \varphi^Q_{0,n}\right](q) 
	=- \dfrac{ e^{-|q|^{2}} }{\sqrt{\pi n!}}   H^Q_{-1,n}(q,\bq).$$ 
\end{theorem}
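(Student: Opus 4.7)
Both displayed formulas collapse, after unwinding the normalization $\varphi^Q_{m,n}=(\pi m!n!)^{-1/2}H^Q_{m,n}$ and using $\sqrt{m!/(m-1)!}=\sqrt m$, to the single unnormalized identity
\[
\Ch H^Q_{m,n}(q)=-e^{-|q|^2}H^Q_{m-1,n}(q,\bq),\qquad m\geq 0,
\]
where for $m=0$ the right-hand side is read through the definition \eqref{16moins1} of $H^Q_{-1,n}$. The plan is to establish this via the polynomial expansion \eqref{chp} of $H^Q_{m,n}$: by linearity and Proposition \ref{propactionemn} (normalized to $\As=1$),
\[
\Ch H^Q_{m,n}(q)=\sum_{\ell=0}^{m\wedge n}(-1)^\ell \ell!\binom{m}{\ell}\binom{n}{\ell}\,\Ch e_{m-\ell,n-\ell}(q).
\]
Combining this with the factorial collapse $\ell!\binom{m}{\ell}\binom{n}{\ell}(n-\ell)!=n!\binom{m}{\ell}$ and the incomplete Gamma split $\psi_b(r)=b!-b!e^{-r}\sum_{k=0}^{b}r^k/k!$ (equivalent to \eqref{factoriel}) decomposes the output into a purely polynomial piece carrying $n!q^{m-n-1}\varepsilon_{n-m}\sum_{\ell}(-1)^\ell\binom{m}{\ell}$ and a Gaussian-weighted piece. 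The polynomial piece vanishes in every regime: for $m\geq 1$ with $m\leq n$ the $\ell$-sum is $(1-1)^m=0$, while for $m\geq n+1$ the factor $\varepsilon_{n-m}$ is $0$.

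The Gaussian piece reads
\[
-n!\,e^{-|q|^2}\sum_{\ell=0}^{m\wedge n}(-1)^\ell\binom{m}{\ell}\sum_{k=0}^{n-\ell}\frac{q^{m-n-1+k}\bq^{k}}{k!},
\]
where the centrality of $|q|^2=q\bq\in\R$ and the commutation $\bq\,q^j=q^j\bq$ keep every monomial in $q^a\bq^b$-normal form. Swapping the $\ell$ and $k$ summations, applying the partial-sum binomial identity $\sum_{\ell=0}^{n-k}(-1)^\ell\binom{m}{\ell}=(-1)^{n-k}\binom{m-1}{n-k}$, then setting $j=n-k$ and using $n!/(n-j)!=j!\binom{n}{j}$, collapses the double sum into precisely the polynomial expansion \eqref{chp} of $H^Q_{m-1,n}(q,\bq)$, yielding the required $-e^{-|q|^2}H^Q_{m-1,n}(q,\bq)$. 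The case $m=0$ is immediate: only $\ell=0$ survives, Proposition \ref{propactionemn} gives $\Ch(\bq^n)(q)=q^{-n-1}\psi_n(|q|^2)$, and the integral representation \eqref{factoriel} identifies this with $-e^{-|q|^2}H^Q_{-1,n}(q,\bq)$.

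The main obstacle is the combinatorial bookkeeping when reorganizing the Gaussian piece: the correct truncation $\min(m\wedge n,\,n-k)$ in the inner $\ell$-sum must be tracked so that the two regimes $m\leq n$ and $m\geq n+1$ eventually unify under the single index range $0\leq j\leq (m-1)\wedge n$ corresponding to the defining sum of $H^Q_{m-1,n}$; the non-commutative aspect is harmless, since $|q|^{2k}=q^k\bq^k$ is central. A cleaner conceptual route, should the combinatorics prove painful, is furnished by Lemma \ref{lemeq}: one checks from the angular integration on $\C_I$ that the Kronecker-delta $\delta_{a+\ell,b}$ is insensitive to $I$, so $\Ch^I e_{a,b}$ is independent of $I$ and hence $\Ch=\Ch^I$ on polynomials; specializing to $q\in\C_I$ then reduces the claim to the classical formula for the weighted complex Cauchy transform acting on the complex It\^o--Hermite polynomials from \cite{In}.
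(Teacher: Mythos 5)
Your argument is correct, and it reaches the theorem by a genuinely more elementary route than the paper's. Both proofs rest on the same foundation, namely the action of $\Ch$ on the monomials $e_{a,b}=q^a\bq^b$ computed in Proposition \ref{propactionemn} via the series expansion of the kernel and polar coordinates; they diverge in how $\Ch H^Q_{m,n}$ is then assembled. The paper substitutes the hypergeometric representation \eqref{16negative} into the intermediate formula \eqref{actiongeneric}, reducing everything to the two integrals $I_{m,n}$ and $J_{m,n}$ of ${_1F_1}\cdot e^{-t}$, which it evaluates by citing tabulated indefinite-integral and differentiation formulas for confluent hypergeometric functions \cite{MagnusOberhettingerSoni1966,Brychkov2008}. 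You instead expand $H^Q_{m,n}$ by its defining polynomial formula \eqref{chp}, apply the monomial formula term by term, and resum; the three key steps all check out: the factorial collapse $\ell!\binom{m}{\ell}\binom{n}{\ell}(n-\ell)!=n!\binom{m}{\ell}$, the vanishing of the non-Gaussian piece (via $(1-1)^m=0$ when $m\le n$ and via $\varepsilon_{n-m}=0$ when $m>n$), and the partial-sum identity $\sum_{\ell=0}^{N}(-1)^\ell\binom{m}{\ell}=(-1)^{N}\binom{m-1}{N}$, which in fact treats the regimes $m\le n$ and $m\ge n+1$ uniformly because $\binom{m-1}{N}=0$ for $N\ge m$, so the case discussion you worry about in your ``main obstacle'' paragraph dissolves. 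The non-commutativity is indeed harmless since $q$ and $\bq$ commute and $|q|^{2k}$ is real. What your route buys is complete independence from special-function tables (every step is a finite binomial manipulation); what the paper's route buys is consistency with the hypergeometric representation used elsewhere, e.g.\ to define $H^Q_{-1,n}$ and to extend to negative indices. Your proposed shortcut via Lemma \ref{lemeq} and reduction to the complex computation of \cite{In} is plausible but only sketched, and is not needed given that the direct computation closes.
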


\begin{proof}
	Notice first that the specific case of $m=0$ 
 can be checked easily by taking $m=0$ in Proposition \ref{propactionemn}. Indeed, 
\begin{align*}
	\Ch (H^Q_{0,n}) =   \Ch (e_{0,n})= -
	\frac{\overline{q}^{n+1}}{n+1}    {_1F_1}\left( \begin{array}{c} 1 \\ n+2 \end{array}\bigg | |q|^2 \right)
	e^{-|q|^2} 
	\end{align*}
	which is close to the hypergeometric function $H^Q_{-1,n}(q,\bq)$ in \eqref{16moins1}.
	
For $m>0$. we make use of \eqref {actiongeneric} and the linearity of $\Ch $ to have
 	\begin{align*} 
\Ch H_{m,n}(q)&=  
 q^{m-n-1} \left( \varepsilon_{n-m}  I_{m,n}(|q|^2) - \varepsilon_{m-n-1} J_{m,n}(|q|^2)\right) , 
\end{align*}
 with
	\begin{align*} 
I_{m,n}(|q|^2): 
=
\frac{ (-1)^{m} n!}{(n-m)!}    \int_0^{|q|^2}
t^{n-m}
{_1F_1}\left( \begin{array}{c} -m \\ n-m +1 \end{array}\bigg | t \right) e^{-t} dt
 \end{align*}
and
	\begin{align*}  
J_{m,n}(|q|^2): 
=   \int_{|q|^2}^{\infty} {_1F_1}\left( \begin{array}{c} -n \\ m-n +1 \end{array}\bigg | t \right) e^{-t} dt.  
\end{align*}
Direct	 computation
	 making appeal to indefinite integrals for the confluent hypergeometric 
	function ${_1F_1}$, see for instance \cite[p. 266]{MagnusOberhettingerSoni1966}, or equivalently the differentiation formulas in \cite[4, p. 73]{Brychkov2008} 
	with $a=1-m$,  $b=n-m+2$ and  $c=1=k$ for the evaluation of $I_{m,n}(|q|^2)$
 and  
	\cite[6, p. 73]{Brychkov2008} 
	with $a=-n$,  $b=|m-n|$ and $c=1=k$ for $J_{m,n}(|q|^2)$, 
 keeping in mind the hypergeometric representation of $H^Q_{m-1,n}(q,\bq)$ given through \eqref{16negative}, we arrive at \begin{align}\label{Casem0}
	I_{m,n}(|q|^2)  	= - q^{n-m+1} H^Q_{m-1,n}(q,\bq)  e^{-{|q|^2}} = -J_{m,n}(|q|^2).  
	\end{align}
	Therefore, the identity \eqref{actionCHQ1} follows since $\varepsilon_{n-m}  + \varepsilon_{m-n-1}=1$.
\end{proof}

\begin{remark}	
	Green formula was employed  in \cite{AnderssonHinakkenen1989,ArazyKhavinson1992,Dostanic1996} to get the explicit expression of the unweighted complex Cauchy transform on bounded domains when acting on specific elementary functions. See also \cite{In} for the weighted complex Cauchy transform on the complex It\^o--Hermite polynomials of total degree great than $1$.
	Adoption of similar approach for the quaternionic Cauchy transform requires an appropriate quaternionic analogue of this famous Green formula (which can be obtained starting from quaternionic version of Cauchy representation theorem \cite{ColomboGentiliSabadini2010,ColomboMongodi2019}). 
	Here, we have given a direct proof using the expansion series of the kernel function combined with the existing integral formulas for special functions.
\end{remark}

For every nonnegative integers $m,n$, we denote by $ \psi_{m,n}$  the Hermite functions defined by 
\begin{align}\label{psifct} \psi_{m,n}(p) := -e^{-|p|^2}H^Q_{m,n-1} (p,\bp) = \overline{\Ch (H^Q_{n,m})(p)}.
\end{align} 
Accordingly, for given nonnegative integer $j$, we perform the spaces
 $$E_{j}^+   =  \overline{span\{\psi_{n, n+j}; \, n=0,1,2, \cdots \}}^{\LgH} $$
and 
$$E_{j}^- = \overline{span\{ \psi_{n+j,n }; \, n=0,1,2, \cdots \}}^{\LgH}$$ 
as well as the spaces 
$E_\ell = E_{|\ell|}^+$ if $\ell \geq 0$ and $E_\ell = E_{|\ell|}^-$ when $\ell< 0$.

\begin{theorem}	The spaces 
	$ E_\ell$, for varying integer $\ell$,
	 form an orthogonal Hilbertian decomposition for the range of the weighted Cauchy transform $\Ch$ in  {$\LgH$}.
\end{theorem}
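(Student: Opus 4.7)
The plan is to deduce the decomposition from the explicit action of $\Ch$ on the complete orthonormal system $\{\varphi^Q_{m,n}\}$ provided by Theorem~\ref{thmM1thm23}, combined with an angular orthogonality argument performed slice by slice.

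First I would identify a total family in $\overline{\mathrm{Range}(\Ch)}$. Since $\{\varphi^Q_{m,n}\}_{m,n\ge 0}$ is a complete orthonormal basis of $\LgH$ and $\Ch$ is a bounded right $\Hq$-linear operator, the closure of its range is the closed right $\Hq$-span of the images $\Ch\varphi^Q_{m,n}$. By Theorem~\ref{thmM1thm23}, up to a nonzero scalar each such image is of the form $e^{-|q|^2}H^Q_{m-1,n}(q,\bq)$ (the case $m=0$ being handled by the hypergeometric function \eqref{16moins1}); through the identity $\psi_{m,n}=\overline{\Ch(H^Q_{n,m})}$ and the relation $\overline{H^Q_{a,b}}=H^Q_{b,a}$, this total family can be read off directly in terms of the functions $\psi_{m,n}$ as $m,n$ range over $\N$. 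Re-indexing the pairs $(m,n)$ by the integer diagonal $\ell:=n-m\in\Z$ partitions them into the two subfamilies $\{\psi_{n,n+\ell}\}_n$ (for $\ell\ge 0$) and $\{\psi_{n+|\ell|,n}\}_n$ (for $\ell<0$), whose closed spans are by definition $E_\ell$, so $\overline{\mathrm{Range}(\Ch)}=\overline{\sum_{\ell\in\Z}E_\ell}$.

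Next I would prove the pairwise orthogonality $E_\ell\perp E_{\ell'}$ for $\ell\ne\ell'$. It suffices to show that $\scal{\psi_{m_1,n_1},\psi_{m_2,n_2}}_{\Hq}=0$ whenever $n_1-m_1\ne n_2-m_2$. Unfolding the definition of $\psi_{m,n}$ and using $\overline{H^Q_{a,b}}=H^Q_{b,a}$, this inner product reduces to an integral of a product of two It\^o--Hermite polynomials against the weight $e^{-3|p|^2}d\lambda(p)$. Decomposing $\Hq=\bigcup_{I\in\wSq}\C_I$ and using polar coordinates $p=\rho e^{I\phi}$ on each slice, the defining expansion \eqref{chp} shows that on $\C_I$ every $H^Q_{a,b}(p,\bp)$ carries the pure angular factor $e^{I(a-b)\phi}$ times a radial polynomial. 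The integrand therefore acquires an overall angular factor $e^{I[(n_1-m_1)-(n_2-m_2)]\phi}$, whose integral on $[0,2\pi)$ vanishes whenever the two diagonals differ. Averaging over $\wSq$ merely multiplies the result by $\As$, so the vanishing is preserved.

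Finally, the orthogonal Hilbertian decomposition follows from standard Hilbert space arguments: each $E_\ell$ is closed by construction, the family is pairwise orthogonal by the preceding step, and the closure of their algebraic sum coincides with $\overline{\mathrm{Range}(\Ch)}$ by the first step. The main technical obstacle I anticipate is the careful bookkeeping required when passing from the integral over $\Hq$ to slice coordinates, since noncommutativity of $\Hq$ prevents a global polar parametrisation; the resolution is that on any fixed slice $\C_I$ every factor in the integrand lies in the commutative plane $\C_I$, so the classical angular orthogonality $\int_0^{2\pi}e^{Ik\phi}d\phi=0$ for $k\ne 0$ applies verbatim, and the subsequent hemisphere integration over $\wSq$ acts merely as a harmless positive scaling.
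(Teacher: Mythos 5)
Your proposal is correct and follows essentially the same route as the paper: both identify the closure of the range via the images $\Ch H^Q_{m,n}=-e^{-|q|^2}H^Q_{m-1,n}$ of the orthogonal basis (hence the $\psi$'s, reorganised along the diagonals $\ell$), and both derive the orthogonality $E_\ell\perp E_{\ell'}$ from the vanishing of the slice angular integral $\int_0^{2\pi}e^{Ik\theta}\,d\theta$ for $k\neq 0$, the hemisphere average contributing only the factor $\As$ (this is precisely the paper's quantity $A_{m,n,j,k}=2\pi\delta_{m-j,n-k}$ appearing in \eqref{scalarChCh}). The only difference is cosmetic: you spell out the totality argument (boundedness of $\Ch$ plus completeness of $\{\varphi^Q_{m,n}\}$) slightly more explicitly than the paper's closing identity $\bigoplus_{\ell\in\Z}E_\ell=\Ch\bigl(\bigoplus_{n}\mathcal{F}^{2}_{n,slice}\bigr)$.
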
   

\begin{proof}
	We begin by noticing that $\psi_{n,m} \in \LgH $. Indeed, we have
	$$ \norm{ \psi_{n,m}}_{\Hq}^2 = \norm{ e^{-|p|^2} H^Q_{m-1,n}}_{\Hq}^2 
	\leq \As\norm{  H^Q_{m-1,n}}^2_{\C_I} < \infty
		$$
	by means of (v) in \cite[Proposition 3.2]{In}.
	Next, by Theorem \ref{thmM1thm23}, the hypergeometric representation \eqref{16negative} of $H^Q_{m,n}$ and Fubini's theorem, we can show that 
	the orthogonality of the system $(\psi_{n,m})_{m,n}$ in $\LgH $ is equivalent to the nullity of the angular part in the integral giving $ \scal{\psi_{n,m},\psi_{k,j}}_{\Hq}$  given by 
	$$ A_{m,n,j,k}  
	:= \int_0^{2\pi} e^{I_q(n+j-m-k)\theta} d\theta = 2\pi \delta_{m-j,n-k} .$$
	Indeed, direct computation shows that  
	\begin{align}\label{scalarChCh}
	\scal{\psi_{n,m},\psi_{k,j}}_{\Hq} 
	& = \pi  c_{m-1,n} c_{j-1,k} 	 \delta_{m-j,n-k}  \int_0^{\infty}
	\frac{e^{-3t} t^{m+n-1}  R_{m,n,j,k}(t)  }{t^{\min(m-1,n)+\min(j-1,k)}}dt ,
	\end{align}
	where the constants $c_{m,n}$ are as in \eqref{constcmn} and $ R_{m,n,j,k}$ is 
	given by 
	\begin{equation} \label{ptodHyp}
	R_{m,n,j,k}(t) :=  {_1F_1}\left( \begin{array}{c} - (m-1)\wedge n \\ |m-1-n|+1\end{array}\bigg | t \right)
	{_1F_1}\left( \begin{array}{c} -(j-1)\wedge k \\ |j-1-k|+1\end{array}\bigg | t \right) .
	\end{equation}
	Thus, for $m-j=\ell \ne n-k =\ell'$, we have
	$	\scal{\psi_{n,m},\psi_{k,j}}_{\Hq} =0$. This proves in particular that the $E_\ell$; $\ell\in \Z$, form an orthogonal sequence in $\LgH$.  Moreover, we have  
 $$\bigoplus_{\ell\in\Z} E_\ell  
 = \Ch\left( \bigoplus_{n=0}^\infty \mathcal{F}_{n,slice}^{2} \right) .$$ 
\end{proof}

\begin{corollary}
	\label{thm23} 
	The functions  $\psi_{n,m}$,  for varying $m=0,1, \cdots,$ and fixed $n$, (resp. for varying $n=0,1, \cdots,$ and fixed $m$) constitute an orthogonal system in $\LgH$ whose square norm is given by 
	\begin{equation}\label{normCH}
\norm{ \psi_{n,m}}_{\Hq}^2 =   \frac{\pi}{3^{m+n}} \left\{\begin{array}{ll}   
\displaystyle \frac{4^{m-1 } ((n)!)^2  }{(n-m+1)!}
{_2F_1}\left( \begin{array}{c} 1-m  , 1-m   \\ n-m+2\end{array}\bigg | \dfrac1 4 \right); & m\leq n+1,\\\\
 \displaystyle   \frac{4^{ n } \left((m-1 )!\right)^2  }{(m-1-n)!}
{_2F_1}\left( \begin{array}{c} -  n  , - n  \\ m-n\end{array}\bigg | \dfrac1 4 \right); & m \geq n +1 .
\end{array} \right.  
\end{equation} 
\end{corollary}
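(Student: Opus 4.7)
The plan is to specialize the scalar product identity \eqref{scalarChCh}, already obtained in the course of proving the preceding theorem, to the diagonal pair $(k,j)=(n,m)$, and then to evaluate the resulting one-variable integral via a classical product formula for confluent hypergeometric functions.

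First, I would set $(k,j)=(n,m)$ in \eqref{scalarChCh}: the Kronecker factor $\delta_{m-j,n-k}$ becomes $1$, and $R_{m,n,m,n}(t)$ reduces to the square of a single ${_1F_1}$, so that
\[ \norm{\psi_{n,m}}^2_{\Hq} = \pi\, c_{m-1,n}^{\,2} \int_0^\infty e^{-3t}\, t^{\,m+n-1-2\min(m-1,n)} \left({_1F_1}\!\left(-(m-1)\wedge n;\, |m-1-n|+1;\, t\right)\right)^2 dt. \]

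The central step is to invoke the classical integral identity (Bailey; see e.g.\ Erd\'elyi's tables of integral transforms)
\[ \int_0^\infty e^{-st}\, t^{c-1}\bigl({_1F_1}(a;c;t)\bigr)^2 dt = \Gamma(c)\, s^{-c}\, \left(1-\frac{1}{s}\right)^{-2a}\,{_2F_1}\!\left(a,a;c;\frac{1}{(s-1)^2}\right), \]
valid for $\Re(c)>0$ and $s>1$. Applied with $s=3$, the argument of the Gauss hypergeometric function becomes $\tfrac14$, and the prefactor $s^{-c}(1-1/s)^{-2a}$ will produce the global $3^{-(m+n)}$ appearing in \eqref{normCH}.

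The remainder is bookkeeping over the two cases dictated by the position of $m-1$ relative to $n$. When $m\leq n+1$, one has $(m-1)\wedge n=m-1$, hence $c=n-m+2$ and $a=1-m$; by \eqref{constcmn}, $c_{m-1,n}^{\,2}=(n!)^2/((n-m+1)!)^2$, and multiplying $\Gamma(c)=(n-m+1)!$ with $(2/3)^{-2(1-m)}\,3^{-(n-m+2)}=4^{m-1}/3^{m+n}$ yields the first branch of \eqref{normCH}. When $m\geq n+1$, the symmetric choice $(m-1)\wedge n=n$, $c=m-n$, $a=-n$, $c_{m-1,n}^{\,2}=((m-1)!)^2/((m-n-1)!)^2$ produces the second branch in the same fashion.

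The main technical point is locating and correctly applying the specific $_1F_1$-squared integral; everything else is routine, modulo the care needed to match the asymmetric shift $m\mapsto m-1$ occurring in $c_{m-1,n}$ with the symmetric form $4^{m-1}(n!)^2/(n-m+1)!$ (and its dual) displayed in the target formula.
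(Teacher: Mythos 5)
Your proposal is correct and follows essentially the same route as the paper: the paper likewise specializes \eqref{scalarChCh} to the diagonal pair to get $\norm{\psi_{n,m}}_{\Hq}^2=\pi (c_{m-1,n})^2\int_0^\infty t^{|m-1-n|}R_{m,n,m,n}(t)e^{-3t}dt$ and then cites the same classical Laplace-transform formula for a product of two confluent hypergeometric functions (from Magnus--Oberhettinger--Soni rather than Erd\'elyi/Bailey). Your write-up is in fact more explicit than the paper's, since you carry out the two-case bookkeeping that the paper leaves to the reader, and both branches check out.
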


\begin{proof} 
	The orthogonality in $\LgH$ of the systems 
	$(\psi_{n,m})_{m}$, for fixed $n$, and $(\psi^Q_{n,m})_{n}$, for fixed $m$, readily follows from \eqref{scalarChCh}. We need only to compute its norm. We have  
	\begin{align*}
	\norm{\psi_{n,m}}_{\Hq}^2 	&= \pi (c_{m-1,n})^2  \int_0^{\infty} t^{|m-1-n|}  	R_{m,n,m,n}(t) e^{-3t}dt.
	\end{align*}	
	Thus, we get \eqref{normCH} by appealing to the integral formula \cite[p. 293]{MagnusOberhettingerSoni1966}.
\end{proof}

\section{THE $k$-BERGMAN PROJECTION OF $\Ch$}
 
 Corollary \ref{thm23}  just proved 
  shows that the functions $\psi_{n,m}= \Ch H^Q_{m,n}$; $m;n=0,1,2,\cdots ,$ belong to $\LgH $, possessing the slice orthogonal decomposition in the sense described in the preliminaries section. 
 Accordingly, the determination of the component function of $\psi_{n,m}$ in $\mathcal{F}_{\ell,slice}^{2}$ requires the  consideration of the orthogonal projection in \eqref{OrthProjeqseriesint2}. 
The next assertion gives the closed integral representation of $P_k\Ch$, the $k$-Bergman projection of the weighted Cauchy singular integral $\Ch$, in terms of the special polynomials 
  \begin{align}\label{chpstar}
G^{Q,\star^q}_{m,n}(q - p,\bq-\bp)  :=  
\sum_{\ell=0}^{m\wedge n}(-1)^{\ell} \ell! \binom{m}{\ell}\binom{n}{\ell} (q - p)^{\star^q(m-\ell)} \star^q (\bq-\bp)^{\star^{\bq}(n-\ell)},
\end{align} 
obtained as the unique left slice polyregular extension in $q$ and right slice polyregular in $\bq$ to the whole $\Hq$ of the facts
$q \longmapsto G^{Q,\star^q}_{k-1,k}(q - p,\bq-\bp)  $ on $\C_p$, for fixed $p$.
It is also the unique right slice polyregular extension in $p$ and left slice polyregular in $\bp$ to the whole $\Hq$ of 
$p \longmapsto G^{Q,\star^q}_{k-1,k}(q - p,\bq-\bp)  $ on $\C_q$, for fixed $q$.

 \begin{theorem} 
	The integral transform $P_k\Ch : \LgH  \longrightarrow \mathcal{F}_{k,slice}^{2}$ is given by 
	\begin{equation}\label{PkC}
	P_k\Ch f(q) = \int_{\Hq}  \mathcal{R}_{k}(q,p) f(p) d\mu(p) 
	\end{equation}
	where $\mathcal{R}_{k}$ stands for 
		\begin{align}\label{PkCkernel}
 \mathcal{R}_{k}(q,p) := 
  \dfrac{1}{\pi k!} e^{-|p|^2} e_{*}^{[q,\bp]} {\,\, {{\star}_{sp}^q}\,\, } H^{\star}_{k-1,k}(q - p,\bq-\bp) . 
	\end{align} 	
\end{theorem}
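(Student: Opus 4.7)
The plan is to compute the Schwartz kernel of the bounded operator $P_k\Ch$ by applying the projector under the integral sign and then matching the resulting expression with the reproducing-kernel identity \eqref{ExpRepKer1}.

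First I would use that $P_k$ is a bounded linear operator on $\LgH$ and therefore commutes with the integral defining $\Ch$ in \eqref{18Cqtb}. This gives, for every $f\in\LgH$,
\begin{equation*}
P_k\Ch f(q) = \int_{\Hq} \mathcal{R}_k(q,p) f(p) d\mu(p),\qquad \mathcal{R}_k(q,p)=\frac{1}{\pi}\bigl[P_k(\mathcal{N}(\cdot,p))\bigr](q),
\end{equation*}
so the problem is reduced to computing $P_k$ applied to the Cauchy kernel in its first variable. Using the representation \eqref{OrthProjeqseriesint2} of $P_k$ together with the series expansion \eqref{ExpansionKer1} of the reproducing kernel $\mathcal{K}_k$ and the identity $\overline{H^Q_{m,k}(r,\overline{r})}=H^Q_{k,m}(r,\overline{r})$, interchanging summation and integration yields
\begin{equation*}
\mathcal{R}_k(q,p) = \frac{1}{\pi^2 k!}\sum_{m=0}^\infty \frac{H^Q_{m,k}(q,\bq)}{m!}\,J_{m,k}(p),\qquad J_{m,k}(p):=\int_{\Hq} H^Q_{k,m}(r,\overline{r})\,\mathcal{N}(r,p)\,d\mu(r).
\end{equation*}

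Second, I would evaluate $J_{m,k}(p)$ by mimicking the polar-coordinates computation of Proposition \ref{propactionemn}. Splitting $\Hq$ into the ball $B_{|p|}$ and its complement, substituting the two pieces of the series expansion \eqref{CauchyKernelexpansion} of $\mathcal{N}(r,p)$, and carrying out the angular integrations over $\wSq$ and the circle (which reduce to Kronecker-type selections on the exponents), the double series collapses into a single incomplete-gamma integral. Evaluating it via \eqref{16moins1}--\eqref{factoriel} and the integral representation \eqref{22} of ${_1F_1}$, exactly as in the proof of Theorem \ref{thmM1thm23}, one obtains $J_{m,k}(p)$ as an explicit quaternionic It\^o--Hermite polynomial in $p$ multiplied by $e^{-|p|^2}$.

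Third, I would sum the resulting series in $m$. The key step is to recognize the Mehler-type sum in $q$ and $p$ obtained in the previous step as the $\star^q_{sp}$-regularized product of $e_*^{[q,\bp]}$ with the polynomial $H^\star_{k-1,k}(q-p,\bq-\bp)$ introduced in \eqref{chpstar}. The Hermite--Laguerre identity allows one to convert the Laguerre form of the reproducing kernel given in \eqref{ExpRepKer1} into its Hermite form in the difference variable $q-p$, while the $\star^q_{sp}$-regularization is precisely what places the exponential factor $e_*^{[q,\bp]}$ on the left of the polynomial while guaranteeing the left slice polyregularity of $q\longmapsto\mathcal{R}_k(q,p)$, hence the membership $\mathcal{R}_k(\cdot,p)\in\mathcal{F}_{k,slice}^{2}$. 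Pulling the $e^{-|p|^2}$ factor in front then yields \eqref{PkCkernel}.

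The main obstacle will be the combinatorial reorganization performed in Step 3: matching the series produced by Step 2 with the $\star^q_{sp}$-product expression \eqref{PkCkernel}. This forces careful bookkeeping of the non-commutative ordering of quaternionic factors and systematic use of the $\star$-regularization to preserve left slice polyregularity in $q$; passing between the Laguerre presentation \eqref{ExpRepKer1} and the Hermite presentation \eqref{PkCkernel} through the Hermite--Laguerre identity is the combinatorial heart of the argument.
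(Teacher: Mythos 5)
Your overall strategy coincides with the paper's: reduce to computing $\tfrac{1}{\pi}P_k(\mathcal{N}(\cdot,p))$, expand the reproducing kernel as the series \eqref{ExpansionKer1}, evaluate the resulting inner integrals through the action of $\Ch$ on the It\^o--Hermite polynomials, and resum. One remark on Step 2: in the integral $\int_{\Hq} H^Q_{k,m}(\xi,\bxi)\,\mathcal{N}(\xi,p)\,d\mu(\xi)$ the polynomial sits to the \emph{left} of the kernel and the arguments of $\mathcal{N}$ are transposed relative to the definition of $\Ch$, so you cannot literally rerun Proposition \ref{propactionemn}; the paper avoids redoing the polar computation altogether by using $\overline{\mathcal{N}(\xi,p)}=-\mathcal{N}(\bp,\bxi)$ and $\overline{H^Q_{k,m}}=H^Q_{m,k}$ to recognize this integral as $-\pi\,\Ch H^Q_{k,m}(p)$ and then quoting Theorem \ref{thmM1thm23}. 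Your route is feasible but duplicates work already done, and the non-commutative ordering issue you defer to Step 3 actually first appears here.

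The genuine gap is in Step 3. After Step 2 the series to be summed is
\begin{equation*}
e^{|p|^2}\mathcal{R}_k(q,p)=\sum_{m=0}^{\infty}\frac{H^Q_{m,k}(q,\bq)\,H^Q_{k-1,m}(p,\bp)}{\pi\, m!\, k!},
\end{equation*}
which is \emph{not} the reproducing-kernel series \eqref{ExpansionKer1}: the second factor carries the shifted lower index $k-1$, the roles of $m$ and $k$ are transposed there, and no conjugation is present. Consequently the Laguerre closed form \eqref{ExpRepKer1} and a Hermite--Laguerre conversion, which is the tool you name, give you nothing for this sum. What is required is a different bilinear (Mehler/operational) identity for complex Hermite polynomials --- the paper invokes Proposition 3.6 of \cite{Gh13ITSF}, which evaluates precisely this sum on a slice as $\tfrac{e^{q\bp}}{\pi k!}H_{k,k-1}(q-p,\bq-\bp)$ --- after which the passage to the $\star$-product form \eqref{PkCkernel} via slice polyregularity and the Identity Principle goes exactly as you anticipate. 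So the plan is structurally sound, but the ``combinatorial heart'' you flag is left unresolved and the specific identity you propose to resolve it with does not apply.
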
	

\begin{proof} 
By means of \eqref{OrthProjeqseriesint2}, the definition of $\Ch$ and the Fubini's theorem, 
\begin{align*} 
P_k\Ch f(q) &= \int_{\Hq}  \mathcal{K}_{k}(q,\xi)  \Ch f(\xi) d\mu(\xi)  
= \int_{\Hq}  \mathcal{R}_{k}(q,p)f(p) d\mu(p)
\end{align*}
for $q\in\Hq$, where the kernel function is given by 	
\begin{align} 
\mathcal{R}_{k}(q,p) & =\int_{\Hq}  \mathcal{K}_{k}(q,\xi) \mathcal{N}(\xi,p) d\mu(\xi) \nonumber\\
& = \sum_{m=0}^\infty \frac{H^Q_{m,k} (q,\bq) }{\pi m! k!} \int_{\Hq}    \overline{H^Q_{m,k} (\xi,\bxi)}   \mathcal{N}(\xi,p) d\mu(\xi) \nonumber\\
& = -\sum_{m=0}^\infty \frac{H^Q_{m,k} (q,\bq) }{\pi m! k!} \Ch H^Q_{k,m} (p)   \nonumber \\
& = \sum_{m=0}^\infty \frac{H^Q_{m,k} (q,\bq) }{\pi m! k!}  H^Q_{k-1,m} (p,\bp)  e^{-|p|^2}. \label{expRk}
\end{align}

Above we have made use of the expansion series of $\mathcal{K}_{k}$ as given by \eqref{ExpansionKer1} and the facts 
$\overline{\mathcal{N}(\xi,p) }= -\mathcal{N}(\bp,\bxi)$,
$\overline{H^Q_{k,m}(p,\bp)} = H^Q_{m,k}(p,\bp) = H^Q_{k,m}(\bp,p),$
 and $\overline{\Ch H^Q_{k,m} (\bp)} = \Ch H^Q_{k,m} (p)$.
  The last equality \eqref{expRk} follows from Theorem \ref{actionCHQ1}.
Now, the function 
\begin{align*} 
\widetilde{\mathcal{R}}_{k}(q,p) := e^{|p|^2} \mathcal{R}_{k}(q,p)
& = \sum_{m=0}^\infty \frac{H^Q_{m,k} (q,\bq) }{\pi m! k!}  H^Q_{k-1,m} (p,\bp)  
\end{align*}
is clearly left slice polyregular of order $k+1$ in $q$ and right slice polyregular of order $k$ in $p$, since it can be rewritten as
$$\widetilde{\mathcal{R}}_{k}(q,p) = \sum_{j=0}^{k} \bq^j f^{(p)}_j(q) = \sum_{j=0}^{k-1}  g^{(q)}_j(p) \bp^j$$
with $f^{(p)}_j$ (resp. $g^{(q)}_j$) being left (resp. right) slice regular functions in $q$ (resp. in $p$). Moreover, Proposition 3.6 in \cite{Gh13ITSF} shows that 
 the expression of 
$\widetilde{\mathcal{R}}_{k}(q,p) := e^{|p|^2} \mathcal{R}_{k}(q,p)$ reduces further to 
$$ E_{k}(q,p) = \frac{e^{ q\bp}}{\pi k!}  H_{k,k-1}(q-p,\bq-\bp)  $$
on $\C_{I_p}$ (resp. $\C_{I_q}$)  as function in $q$ (resp. $p$) for fixed $p$ (resp. $q$).
Finally, using Identity Principle for slice regular functions,  one can proceed as in \cite{BenElhGh2019} to prove that the function in the right hand-side of \eqref{PkCkernel} (in the $(q,p)$ variables) is the  
the unique extension  to left slice hyperholomorphic function in $q$  and right one in $\bq$ outside of $\C_p$, and right slice hyperholomorphic in $p$ and left slice hyperholomorphic in $\bp$ outside of $\C_q$.  Thus, we have 
$$ \widetilde{\mathcal{R}}_{k}(q,p) = \dfrac{e_{*}^{[q,\bp]}}{\pi k!}   {\,\, {{\star}_{sp}^q}\,\, } G^{Q,\star^q}_{m,n}(q - p,\bq-\bp)  .$$ 
This completes the proof.
\end{proof}

 \begin{proposition}   
For every nonnegative integers $k,m,n$, there exists certain nonzero real constant  $d_k^{m,n}$ such that
	\begin{align} \label{actionPCpsig}
	P_k\Ch  \psi_{m,n} (q) =     \frac{\varepsilon_{n+k-m} d_k^{m,n}  }{\pi (n+k-m)! k!}   H^Q_{n+k-m,k} (q,\bq) .
	\end{align}
	In particular, the systems  $P_k\Ch \left( \psi_{m,n}\right)$; $n=0,1,2, \cdots$, for fixed $m$, and   $P_k\Ch \left( \psi_{m,n}\right)$; $m=0,1,2, \cdots$, for fixed $n$ are orthogonal in $\LgH$. 
\end{proposition}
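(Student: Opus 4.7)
The plan is to feed the explicit expression $\psi_{m,n}(p)=-e^{-|p|^{2}}H^Q_{m,n-1}(p,\bp)$ from~\eqref{psifct} into the integral representation of $P_{k}\Ch$ supplied by Theorem~4.1 and to reduce the computation to a one-dimensional radial integral by angular orthogonality.

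First I would use the intermediate series form
$$
\mathcal{R}_{k}(q,p)=\sum_{a=0}^{\infty}\frac{H^Q_{a,k}(q,\bq)}{\pi\,a!\,k!}\,H^Q_{k-1,a}(p,\bp)\,e^{-|p|^{2}}
$$
of the Schwartz kernel derived inside the proof of Theorem~4.1, and exchange sum and integral. The exchange is justified because each $H^Q_{a,k}\in\mathcal{F}_{k,slice}^{2}\subset\LgH$ and the partial sums converge in $\LgH$ by the boundedness of $P_{k}\Ch$. Substituting $\psi_{m,n}$ and using $d\mu=e^{-|p|^{2}}d\lambda$, this yields
$$
P_{k}\Ch\,\psi_{m,n}(q)=-\sum_{a=0}^{\infty}\frac{H^Q_{a,k}(q,\bq)}{\pi\,a!\,k!}\int_{\Hq}H^Q_{k-1,a}(p,\bp)\,H^Q_{m,n-1}(p,\bp)\,e^{-3|p|^{2}}\,d\lambda(p).
$$

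Second I would pass to slice/polar coordinates $p=re^{I\theta}$, $r\geq 0$, $\theta\in[0,2\pi)$, $I\in\wSq$. On each slice $\C_{I}$, both Hermite factors take values in the commutative subalgebra $\C_{I}$ and $H^Q_{\alpha,\beta}(p,\bp)$ carries the single angular mode $e^{I(\alpha-\beta)\theta}$; the angular integrand is therefore $e^{Is\theta}$ with a single $s$, and $\int_{0}^{2\pi}e^{Is\theta}\,d\theta=2\pi\,\delta_{s,0}$ collapses the whole series to one term. Balancing the angular exponents pins the surviving index to the unique nonnegative integer $a=n+k-m$, which produces the factor $\varepsilon_{n+k-m}$; when $n+k-m<0$ the entire sum vanishes, compatibly with $\varepsilon_{n+k-m}=0$. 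The remaining coefficient is
$$
d_{k}^{m,n}\;=\;-\int_{\Hq}H^Q_{k-1,n+k-m}(p,\bp)\,H^Q_{m,n-1}(p,\bp)\,e^{-3|p|^{2}}\,d\lambda(p),
$$
which is real because, after angular averaging, the integrand reduces to a polynomial in $|p|^{2}$ with real coefficients.

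The nonvanishing $d_{k}^{m,n}\neq 0$ then follows from the closed-form Kummer--Gauss reduction of the radial integral, exactly as in the computation of $\norm{\psi_{n,m}}^{2}_{\Hq}$ in Corollary~3.13: writing the two Hermite factors in the hypergeometric form~\eqref{16negative} and applying the integral formula from~\cite[p.~293]{MagnusOberhettingerSoni1966} expresses $d_{k}^{m,n}$ as a Gauss $_{2}F_{1}$ at argument $1/4$ with integer parameters that stays away from zero. The orthogonality of $\{P_{k}\Ch\psi_{m,n}\}_{n}$ (fixed $m$) and of $\{P_{k}\Ch\psi_{m,n}\}_{m}$ (fixed $n$) in $\LgH$ is then an immediate byproduct, since each is a scalar multiple of $H^Q_{n+k-m,k}$ whose index varies with $n$ (resp.\ $m$), while $\{H^Q_{j,k}\}_{j\geq 0}$ is an orthogonal family in $\LgH$. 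The main obstacle in the proof is the precise evaluation of the $_{2}F_{1}$ factor and the verification that it does not vanish; the angular collapse and the orthogonality statement are purely formal once this explicit expression is in hand.
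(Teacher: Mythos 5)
Your strategy is the same as the paper's: expand $\mathcal{R}_{k}(q,p)$ in the series \eqref{expRk}, pair it term by term against $\psi_{m,n}$, let angular orthogonality kill all but one term, and read off $d_k^{m,n}$ as the surviving inner product. The paper simply invokes the precomputed pairings \eqref{scalarChCh} (which were themselves obtained by exactly the polar-coordinate argument you redo), and records $d_k^{m,n}=\scal{ \psi_{k,n+k-m}, \psi_{m,n} }_{\Hq}$. So there is no methodological difference; the issue is in the execution of the angular balance.

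Concretely: with the kernel term $H^Q_{k-1,a}(p,\bp)e^{-|p|^2}$ that you copy from \eqref{expRk} and the test factor $H^Q_{m,n-1}(p,\bp)$, the angular modes are $k-1-a$ and $m-(n-1)$, so balancing gives $a=k+m-n$, \emph{not} $a=n+k-m$ as you assert. Worse, the explicit integral you then write for the coefficient, $-\int_{\Hq}H^Q_{k-1,n+k-m}\,H^Q_{m,n-1}\,e^{-3|p|^2}d\lambda$, has total angular mode $\bigl(k-1-(n+k-m)\bigr)+\bigl(m-n+1\bigr)=2(m-n)$ and therefore \emph{vanishes} whenever $m\neq n$ --- the opposite of the nonvanishing you need, and inconsistent with your own selection rule. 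The coefficient that actually survives in the paper's convention is $\scal{ \psi_{k,n+k-m}, \psi_{m,n}}_{\Hq}$, whose integrand carries $H^Q_{n+k-m-1,k}(p,\bp)$ (the conjugate index ordering), for which the modes do cancel: $(n-m-1)+(m-n+1)=0$. You must track which Hermite factor gets conjugated when the kernel term is rewritten as a $\psi$; as written, your $d_k^{m,n}$ is zero and the argument collapses at the key step. Finally, note that the nonvanishing of the resulting terminating ${}_2F_1$ at $1/4$ is asserted rather than proved in your text (the paper is equally terse, recording only that $d_k^{m,n}$ is finite and real); if you want the ``nonzero'' clause of the statement you should check positivity of that hypergeometric value rather than appeal to it.
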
	

\begin{proof} 
	By taking into account the expansion series of the kernel given by \eqref{expRk}, we can rewrite the integral in \eqref{PkC} in terms of $\psi_{j,k}$ in \eqref{psifct}, as
	\begin{align*}
	P_k\Ch f(q) 
	&= \sum_{j=0}^\infty  \frac{H^Q_{j,k} (q,\bq) }{\pi j! k!} 
	\int_{\Hq}  \overline{\psi_{k,j}(p)} f(p) d\mu(p) .
	\end{align*} 
	So that the only possible nonzero term corresponds to $j =n+k-m$ under the assumption that $n+k\geq m$. This follows by specifying $f= \psi_{m,n}$, 
	and using \eqref{scalarChCh}. Thus, we have  
	\begin{align*}
	P_k\Ch  \psi_{m,n} (q) =     \frac{\varepsilon_{n+k-m} }{\pi (n+k-m)! k!}  d_k^{m,n}  H^Q_{n+k-m,k} (q,\bq) .
	\end{align*}
	The constant $d_k^{m,n}$ is given by 
	$ d_k^{m,n}
	:=    \scal{ \psi_{k,n+k-m}, \psi_{m,n}  }_{\Hq}$ which is finite and real.
\end{proof}

\begin{remark} The polynomials $H^Q_{n,k}$ are eigenfunctions of the operator $P_k\Ch M_g \Ch$, where $M_g$ is the conjugation operator  $M_g(f) (p)= \overline{f(p)}$.
	 This clearly follows from the fact 
	\begin{align}\label{actionPCpsi}
	P_k\Ch \left( \psi_{k,n}\right) (q) 
	& =  \frac{\norm{ \Ch (H^Q_{k,n}) }^2_{\Hq} }{\pi n! k!} H^Q_{n,k} (q,\bq)  .   
	\end{align}	 
\end{remark}

\begin{remark}
The image of $E_\ell^-$ by $P_k\Ch$ reduces to zero for all $\ell > k$. Otherwise, it is one dimensional vector space. While $P_k\Ch(E_\ell^+) $ is always a one dimensional vector space.
\end{remark}

We conclude by giving the explicit sequence of eigenvalues of the integral operator 
$$|P_k\Ch|:= \left( (P_k\Ch)^* P_k\Ch \right)^{1/2}.$$

\begin{proposition}
 The functions $ \psi_{k,n}$; $n=0,1,2,\cdots,$ are eigenfunctions of the operator $|P_k\Ch |^2=(P_k\Ch )^*  P_k\Ch$ with $$\lambda_{k,n}:= \frac{\norm{\psi_{k,n}}^2}{\pi n!k!}$$ as corresponding eigenvalues.
\end{proposition}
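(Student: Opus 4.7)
The plan is to leverage the previous proposition, which already does the heavy computational work, and combine it with an adjoint/SVD argument. Setting $m=k$ in the previous proposition immediately gives
\[
P_{k}\Ch\,\psi_{k,n} \;=\; c_{k,n}\, H^{Q}_{n,k}, \qquad c_{k,n} \;=\; \frac{d_{k}^{k,n}}{\pi n!\,k!} \;=\; \frac{\|\psi_{k,n}\|_{\Hq}^{2}}{\pi n!\,k!} \;=\; \lambda_{k,n},
\]
since $d_{k}^{k,n}=\scal{\psi_{k,n},\psi_{k,n}}_{\Hq}=\|\psi_{k,n}\|^{2}$. Applying $(P_{k}\Ch)^{*}=\Ch^{*}P_{k}^{*}=\Ch^{*}P_{k}$ to both sides, and using that $H^{Q}_{n,k}\in\mathcal{F}^{2}_{k,slice}$ so that $P_{k}H^{Q}_{n,k}=H^{Q}_{n,k}$, the identity reduces to
\[
|P_{k}\Ch|^{2}\psi_{k,n} \;=\; c_{k,n}\,\Ch^{*}H^{Q}_{n,k}.
\]
Therefore the entire claim collapses to establishing the adjoint identity $\Ch^{*}H^{Q}_{n,k}=\psi_{k,n}$, from which $|P_{k}\Ch|^{2}\psi_{k,n}=c_{k,n}\psi_{k,n}=\lambda_{k,n}\psi_{k,n}$ follows at once.

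To verify $\Ch^{*}H^{Q}_{n,k}=\psi_{k,n}$, I would test both sides against the complete orthogonal basis $\{H^{Q}_{a,b}\}_{a,b\geq 0}$ of $\LgH$. On the one hand, the adjoint relation combined with Theorem \ref{thmM1thm23} gives
\[
\scal{H^{Q}_{a,b},\,\Ch^{*}H^{Q}_{n,k}}_{\Hq} \;=\; \scal{\Ch H^{Q}_{a,b},\, H^{Q}_{n,k}}_{\Hq} \;=\; -\!\int_{\Hq}\! H^{Q}_{b,a-1}(q,\bq)\,H^{Q}_{n,k}(q,\bq)\, e^{-2|q|^{2}}d\lambda(q)
\]
for $a\geq 1$ (with the analogous formula involving $H^{Q}_{-1,b}$ when $a=0$). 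On the other hand, the direct expansion of $\scal{H^{Q}_{a,b},\psi_{k,n}}_{\Hq}$ produces an integral of the same shape against the weight $e^{-2|q|^{2}}d\lambda$. Matching the two via the hypergeometric representation \eqref{16negative} of the quaternionic It\^o--Hermite polynomials and the integral identities already invoked for \eqref{scalarChCh} and \eqref{normCH} gives the desired coefficient-by-coefficient equality.

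An equivalent but more structural route is the singular value decomposition: after normalising $\tilde\psi_{n}:=\psi_{k,n}/\|\psi_{k,n}\|$ and $\tilde\phi_{n}:=H^{Q}_{n,k}/\sqrt{\pi n!k!}$, the previous proposition shows $(P_{k}\Ch)\tilde\psi_{n}=\sigma_{n}\tilde\phi_{n}$ with $\sigma_{n}=\|\psi_{k,n}\|/\sqrt{\pi n!k!}$. The family $\{\tilde\psi_{n}\}$ is orthonormal by Corollary \ref{thm23}, the family $\{\tilde\phi_{n}\}$ is an orthonormal basis of $\overline{\mathrm{Ran}(P_{k}\Ch)}=\mathcal{F}^{2}_{k,slice}$, and the remaining $\psi_{m,\ell}$'s with $m\neq k$ are shown (again by the previous proposition) to map into scalar multiples of the same $H^{Q}_{n+k-m,k}$'s, identifying the orthogonal complement of $\overline{\mathrm{span}}\{\psi_{k,n}\}$ with the kernel of $P_{k}\Ch$ up to the SVD pairing. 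This forces $\tilde\psi_{n}$ to be singular vectors, hence $|P_{k}\Ch|^{2}\tilde\psi_{n}=\sigma_{n}^{2}\tilde\psi_{n}$ with $\sigma_{n}^{2}=\lambda_{k,n}$.

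The main obstacle is the bookkeeping inside the adjoint identity $\Ch^{*}H^{Q}_{n,k}=\psi_{k,n}$: controlling the non-commutativity through the relation $\overline{\mathcal{N}(q,p)}=-\mathcal{N}(\bp,\bq)$ and the conjugation symmetry $\overline{H^{Q}_{m,n}(q,\bq)}=H^{Q}_{n,m}(q,\bq)$, while ensuring consistency between the basis expansion and the closed-form computation. Once these are reconciled, the rest of the proof is a short two-line deduction.
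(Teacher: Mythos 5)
Your overall strategy coincides with the paper's: both proofs start from the preceding proposition, which for $m=k$ gives $P_k\Ch\,\psi_{k,n}=\frac{d_k^{k,n}}{\pi n!k!}H^Q_{n,k}$ with $d_k^{k,n}=\|\psi_{k,n}\|_{\Hq}^2$, and then apply the adjoint to $H^Q_{n,k}$. The difference lies in how the adjoint's action is obtained. The paper expands the Schwartz kernel $\overline{\mathcal{R}_{k}(q,p)}$ using \eqref{expRk} together with the orthogonality of the $H^Q_{m,k}$ in $\LgH$, so that $(P_k\Ch)^*(H^Q_{n,\ell})=e^{-|p|^2}H^Q_{n,k-1}(p,\bp)\,\delta_{k,\ell}$ drops out of machinery already in place, with no new integral to evaluate. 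You instead factor $(P_k\Ch)^*=\Ch^*P_k$ (correct, since $P_k$ is an orthogonal projection and $H^Q_{n,k}\in\mathcal{F}^{2}_{k,slice}$) and reduce everything to the single identity $\Ch^*H^Q_{n,k}=\psi_{k,n}$. This reduction is clean and arguably more transparent, but you do not actually prove that identity: you only describe a verification scheme (testing against the basis, matching hypergeometric integrals), so the pivotal step remains a plan rather than a proof, and it is precisely the step where the non-commutativity of $\mathcal{N}$ must be controlled. Note also that combining \eqref{actionPCadj} with the definition \eqref{psifct} gives $\Ch^*H^Q_{n,k}=e^{-|p|^2}H^Q_{n,k-1}(p,\bp)=-\psi_{n,k}$ rather than $+\psi_{k,n}$; the paper itself silently transposes the indices and drops this sign at the same point, so your version inherits the same looseness, but since $\psi_{n,k}$ and $\psi_{k,n}$ are genuinely different functions this should be pinned down if the adjoint identity is to carry the whole argument. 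Your alternative SVD sketch is consistent in outline but ultimately rests on the same unverified adjoint computation to close the loop.
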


\begin{proof}
	Notice first that the formal adjoint of $P_k\Ch$ is given by 
	\begin{align*} 
	(P_k\Ch )^* g (p) &= \int_{\Hq}  \overline{\mathcal{R}_{k}(q,p)} g(q) d\mu(q).
	\end{align*}
	Hence, from the expansion series of $\overline{\mathcal{R}_{k}(q,p)} = \mathcal{R}_{k}(\bp,\bq)$, 				
	we get				
	\begin{align*} 
	(P_k\Ch )^* g (p) 
	&= \sum_{m=0}^\infty \frac{  e^{-|p|^2}H^Q_{m,k-1} (p,\bp) 	 }{\pi m! k!} \int_{\Hq}   \overline{H^Q_{m,k} (q,\bq) } g(q) d\mu(q).
	\end{align*}
	so that
	\begin{align}\label{actionPCadj} 
	(P_k\Ch )^* (H^Q_{n,\ell} ) (p) 
&=  e^{-|p|^2}	H^Q_{n,k-1} (p,\bp)       \delta_{k,\ell} 
	.
	\end{align}
This combined with \eqref{actionPCpsig} yield 
	\begin{align}
(P_k\Ch )^*  P_k\Ch  \psi_{\ell,n} (p) 
&=     \frac{\varepsilon_{n+k-\ell}  d_k^{\ell,n} }{\pi (n+k-\ell)! k!}  (P_k\Ch )^*  H^Q_{n+k-\ell,k} (p) \nonumber
\\&=     \frac{\varepsilon_{n+k-\ell}  d_k^{\ell,n}  }{\pi (n+k-\ell)! k!} e^{-|p|^2}	H^Q_{n+k-\ell,k-1} (p,\bp) \nonumber
\\&=     \frac{\varepsilon_{n+k-\ell}   d_k^{\ell,n} }{\pi (n+k-\ell)! k!} \overline{\Ch	H^Q_{k,n+k-\ell} (p)} .\label{actiPCadjPC}  
\end{align}	
This shows in particular that for $\ell=k$, we have 
$$ |P_k\Ch |^2 \psi_{k,n} = \frac{\norm{\psi_{k,n}}_{\Hq}^2}{\pi n!k!} \psi_{k,n} = \lambda_{k,n} \psi_{k,n}.$$
\end{proof}	

\begin{corollary} The singular values of $P_k\Ch$ are given by 
	$$s_{n}^k	=  
	   \left( \frac{4^{n-1 } (k)!   }{3^{(n+k)/2}n!(k-n+1)!}
	\, {_2F_1}\left( \begin{array}{c} 1-n  , 1-n   \\ k-n+2\end{array}\bigg | \dfrac1 4 \right)\right)^{1/2}$$
	when $n\leq k+1$, and 
		$$s_{n}^k	=  
 \left(  \frac{4^{ k }(n-1 )!   }{3^{(n+k)/2} k! (n-1-k)!}
		\, {_2F_1}\left( \begin{array}{c} -  k  , - k  \\ n-k\end{array}\bigg | \dfrac1 4 \right) \right)^{1/2}$$ 
	otherwise.

\end{corollary}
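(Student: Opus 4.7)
The plan is to read off the singular values directly from the preceding Proposition together with the explicit norm formula \eqref{normCH} of Corollary \ref{thm23}. By definition the singular values of $P_k\Ch$ are the square roots of the eigenvalues of $|P_k\Ch|^{2}=(P_k\Ch)^{\ast}P_k\Ch$; the preceding Proposition supplies the family $\psi_{k,n}$, $n=0,1,2,\ldots$, as eigenfunctions with corresponding eigenvalues $\lambda_{k,n}=\norm{\psi_{k,n}}_{\Hq}^{2}/(\pi\,n!\,k!)$, so formally $s_n^k=\sqrt{\lambda_{k,n}}$.

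Before substituting, I would first check that this sequence exhausts all the nonzero singular values, so that the listing in the statement is indeed complete. Two ingredients from the section already in hand supply this. On one side, by \eqref{actionPCpsig} the range of $P_k\Ch$ is contained in the $\Hq$-span of $\{H^Q_{j,k}\}_{j\ge 0}\subset \mathcal{F}_{k,slice}^{2}$; on the other side, \eqref{actionPCadj} yields $(P_k\Ch)^{\ast}(H^Q_{n,k})=-\psi_{k,n}$. Consequently the closed range of $(P_k\Ch)^{\ast}$ coincides with the closed $\LgH$-span of $\{\psi_{k,n}\}_{n\ge 0}$, and the standard identity $(\ker P_k\Ch)^{\perp}=\overline{\mathrm{Range}((P_k\Ch)^{\ast})}$ then shows that this orthogonal family spans $(\ker P_k\Ch)^{\perp}$. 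Hence the $\lambda_{k,n}$ are precisely the nonzero squared singular values, counted with their multiplicities.

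It then remains only to substitute. Applying \eqref{normCH} under the index identification $(n,m)\mapsto(k,n)$, so that the dichotomy $m\le n+1$ (respectively $m\ge n+1$) translates into $n\le k+1$ (respectively $n\ge k+1$), and dividing by $\pi\,n!\,k!$ before extracting the square root, produces the two closed ${_2F_1}$-expressions displayed in the statement, after the elementary factorial simplifications $(k!)^{2}/(n!\,k!)=k!/n!$ in the first regime and its counterpart $((n-1)!)^{2}/(n!\,k!)=(n-1)!/(n\,k!)$ in the second.

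The main obstacle, such as it is, is purely bookkeeping: keeping the index convention of \eqref{normCH} (which is stated for $\psi_{n,m}$) consistent with the eigenfunctions $\psi_{k,n}$ of interest here, and correctly matching the two piecewise branches to their respective hypergeometric parameters. No deeper difficulty arises once the completeness established in the previous paragraph is in place.
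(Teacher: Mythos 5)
Your proposal is correct and follows the same route the paper intends: the corollary is read off by substituting the norm formula \eqref{normCH} (with the index identification $(n,m)\mapsto(k,n)$) into the eigenvalues $\lambda_{k,n}=\norm{\psi_{k,n}}_{\Hq}^{2}/(\pi\,n!\,k!)$ from the preceding Proposition and taking square roots; your additional verification that the family $\psi_{k,n}$ spans $(\ker P_k\Ch)^{\perp}$, via $(\ker P_k\Ch)^{\perp}=\overline{\mathrm{Range}((P_k\Ch)^{\ast})}$ and \eqref{actionPCadj}, is a worthwhile supplement that the paper leaves implicit. One caveat: carrying out the substitution exactly as you describe gives $3^{n+k}$ (not $3^{(n+k)/2}$) inside the outer square root, and your own simplification $((n-1)!)^{2}/(n!\,k!)=(n-1)!/(n\,k!)$ yields an extra factor $n$ in the denominator of the second branch that is absent from the printed formula but present in the subsequent asymptotic remark; so your computation in fact exposes typographical slips in the stated corollary rather than reproducing it verbatim.
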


\begin{remark}
The previous result shows in particular that the product of $k$-Bergman projector with the weighted  quaternionic Cauchy transform $\Ch$ and its complex analogue have the same singular values, up to multiplicative constant. {Subsequently, these singular values are asymptotically as 
$$s_{n}^k 
\sim
\left( \frac{4^{k}(n-1)!}{3^{n+k}n k! (n-1-k)!}\right) ^{1/2}$$ 
for $k$ is fixed and $n$ large enough.} 
\end{remark}

	\begin{remark}
		The previous result can be obtained using the integral representation of the operator $(P_k\Ch)^* P_k\Ch $, 
		\begin{align} \label{integralPChPch}
		(P_k\Ch)^* P_k\Ch f (p) = \int_{\Hq} \mathcal{S}_{k}(p,q) f(q) d\mu(q) 	
		\end{align}
		where the Schwartz kernel $\mathcal{S}_{k}$ is given by 
		\begin{align} 
		\mathcal{S}_{k}(p,q) &:=  
		\int_{\Hq} \overline{\mathcal{R}_{k}(\xi,p)} 
		\mathcal{R}_{k}(\xi,q)  d\mu(\xi) \nonumber
	\\&= e^{-|p|^2-|q|^2} \sum_{m=0}^\infty  \frac{H^Q_{m,k-1} (p,\bp)   H^Q_{m,k-1} (q,\bq)}{\pi m! k!} .  \label{kernelPChPch}
		\end{align}
		The last equality follows from  \eqref{expRk} and the orthogonality of $H^Q_{m,k}$.	
\end{remark}
	
\begin{remark}	
	For $k\geq 1$, the kernel $\mathcal{S}_{k}$ is closely connected to the reproducing kernel of $\mathcal{F}_{k-1,slice}^{2}$, 
	\begin{align} \label{kernelPChPchk1}
	\mathcal{S}_{k}(p,q) = \frac{e^{-|p|^2-|q|^2}}{k}  \mathcal{K}_{k-1}(\bp,\bq).
	\end{align}
\end{remark}


\begin{thebibliography}{45}
	
	\bibitem{Adler1997} \textsc{S. L. Adler}, Quaternionic quantum mechanics and noncommutative dynamics. In Proceedings of the Second International A. D. Sakharov Conference on Physics, Moscow, World Scientific Publishing, River Edge, NJ, (1997),  337--341.
	
	\bibitem{AlpayColomboSabadiniSalomon2014} \textsc{D. Alpay, F. Colombo, I. Sabadini, G. Salomon,}  
	{ The Fock space in the slice hyperholomorphic Setting}.
	In Hypercomplex Analysis: New perspectives and applications.
	\textit{Trends Math.}, (2014) 43--59.
	
	\bibitem{4AlpayColomboGantnerSabadini2015} \textsc{D. Alpay, F. Colombo, J. Gantner, I. Sabadini,} A new resolvent equation for the S-functional
	calculus, J. Geom. Anal., 25 (2015), 1939--1968.
	
	\bibitem{5AlpayColomboKimsey2016} 
	\textsc{D. Alpay, F. Colombo, D.P. Kimsey,} The spectral theorem for quaternionic unbounded normal
	operators based on the S-spectrum, J. Math. Phys., 57 (2016), no. 2, 023503, 27 pp.
	
	\bibitem{2AlpayColomboSabadini2016} 
	\textsc{D. Alpay, F. Colombo, I. Sabadini,} Slice Hyperholomorphic Schur Analysis, Operator Theory: Advances
	and Applications, 256. Birkh\"auser/Springer, Cham, 2016. 
	
	\bibitem{1AlpayColomboSabadini2020} \textsc{D. Alpay, F. Colombo, I. Sabadini,} Quaternionic de Branges spaces and characteristic operator
	function, Springer Briefs in Mathematics, Springer, Cham, 2020/21.
	
	\bibitem{AnderssonHinakkenen1989} 
	\textsc{J.M. Anderson, A. Hinkkanen,}  
	The Cauchy transform on bounded domains. 
	\textit{Proc. Amer. Math. Soc.}, 107 (1989) no. 1, 179--185. 
	
	\bibitem{ArazyKhavinson1992} 
	\textsc{J. Arazy,  D. Khavinson,} Spectral estimates of Cauchy's transform in $L^2(\Omega)$. 
	\textit{Integral Equations and Operator Theory}, 15 (1992) (6),  901--919.
	
	\bibitem{BenElhGh2019}  
	\textsc{A. Benahmadi, A. El Hamyani, A. Ghanmi,} 
    { S--polyregular Bargmann spaces}.  
    \textit{Adv. Appl. Clifford Algebr.}, 29 (2019) no. 4, Paper No. 84, 30 pp.

\bibitem{BirkhoffvonNeumann1936}
G. Birkhoff, J. von Neumann, 
The logic of quantum mechanics, Ann. Math. 37 (4)  (1936) 823--843.

\bibitem{Brychkov2008} \textsc{Y.A. Brychkov,}
\textit{Handbook of Special Functions: Derivatives, Integrals, Series and Other Formulas}.
CRC Press, Boca Raton, FL, 2008.

\bibitem{3CerejeirasColomboKahlerSabadini2019} 
\textsc{P. Cerejeiras, F. Colombo, U. K\"ahler, I. Sabadini,} Perturbation of normal quaternionic operators,
Trans. Amer. Math. Soc., 372 (2019), no. 5, 3257--3281.

	\bibitem{7ColomboGantner2019} \textsc{F. Colombo, J. Gantner,} Quaternionic closed operators, fractional powers and fractional diffusion
	processes, Operator Theory: Advances and Applications, 274. Birkh\"auser/Springer, Cham,  2019. 
	
	\bibitem{6ColomboGantnerKimsey2018} \textsc{F. Colombo, J. Gantner, D.P. Kimsey,} Spectral theory on the S-spectrum for quaternionic operators.
	Operator Theory: Advances and Applications, 270. Birkhauser/Springer, Cham, 2018. 
	
	\bibitem{12ColomboGantnerPinton2021} \textsc{F. Colombo, J. Gantner, S. Pinton,} An Introduction to Hyperholomorphic Spectral Theories and
	Fractional Powers of Vector Operators, Adv. Appl. Clifford Algebr., 31 (2021), no. 3, Paper No. 45.
	
\bibitem{ColomboGentiliSabadini2010} 
\textsc{F. Colombo,  G. Gentili, I. Sabadini,} A Cauchy kernel for slice regular functions. 
\textit{Ann. Global Anal. Geom.}, 37 (2010) no. 4, 361--378.

\bibitem{ColomboMongodi2019}  
\textsc{F. Colombo, S. Mongodi,} The Cauchy transform in the slice hyperholomorphic setting and related topics.
\textit{J. Geometry and Physics}, 137 (2019) 162--183.

	\bibitem{8ColomboSabadini2009} \textsc{F. Colombo, I. Sabadini,} On some properties of the quaternionic functional calculus, J. Geom. Anal.,
19 (3) (2009), pp. 601--627.
	

		
		
	\bibitem{9ColomboSabadini2010} \textsc{F. Colombo, I. Sabadini,} On the formulations of the quaternionic functional calculus, J. Geom.
	Phys., 60(10)(2010), pp. 1490--1508.
	
		\bibitem{ColomboSabadini2011} \textsc{F. Colombo, I. Sabadini,} The quaternionic evolution operator, Adv. Math. 227 (2011),
no. 5, 1772?1805.
		
		\bibitem{11ColomboSabadiniStruppa2011} \textsc{F. Colombo, I. Sabadini, D.C. Struppa,} Noncommutative functional calculus. Theory and applications of slice hyperholomorphic functions, Progress in Mathematics, 289. Birkh\"auser/Springer Basel
		AG, Basel, 2011. 
		
		\bibitem{Dostanic1996}  \textsc{M.R. Dostani\`{c},}
	The properties of the Cauchy transform on a bounded domain.
	\textit{J. Operator Theory}, 36 (1996) 233-247
	
\bibitem{Dostanic2000} \textsc{M.R. Dostani\`{c},}
Spectral properties of the Cauchy transform in $L^2(\C,d\mu)$. \textit{Q. J. Math.}, 51 (2000), no. 3, 307--312.

\bibitem{ElhamyaniPhThesis2017}  \textsc{A. El Hamyani,}
\textit{Analytic properties of some orthogonal polynomials on $\mathbb{C}$, $\Hq$ and the hyperbolic disc $\mathbb{D}$}. Ph.D. Thesis, Mohammed V University in Rabat, 2017.

 \bibitem{ElHamyani2018}  \textsc{A. El Hamyani, A. Ghanmi,} 
On some analytic properties of slice poly-regular Hermite polynomials. 
\textit{Math. Methods Appl. Sci.}, 41 (2018), no. 17, 7985--8002.

		\bibitem{Emch1963} \textsc{G. Emch,} M\'ecanique quantique quaternionienne et relativit\'e restreinte. I, Helv. Phys. Acta 36  (1963), 739--769.


\bibitem{FinkelsteinJauchSchiminovichSpeiser1962} \textsc{D. Finkelstein,  J. M. Jauch,    S. Schiminovich, D. Speiser,} Foundations of quaternion quantum mechanics,  J. Math.
Phys. 3 (1962), 207?220 .

\bibitem{Gantner2014} \textsc{J. Gantner,}
\textit{Slice hyperholomorphic functions and the quaternionic functional calculus}. 
Ph. D. Thesis, Vienna University of Technology 2014.

	\bibitem{10Gantner2020} \textsc{J. Gantner,} Operator Theory on One-Sided Quaternionic Linear Spaces: Intrinsic S-Functional
Calculus and Spectral Operators, Mem. Amer. Math. Soc., 267 (2020), no. 1297. 

\bibitem{GentiliStoppato2008} \textsc{G. Gentili, C. Stoppato,}
Zeros of regular functions and polynomials of a quaternionic variable. Michigan
\textit{Math. J.}, 56 (2008) 655--667. 

\bibitem{GentiliStruppa07} 
\textsc{G Gentili., D.C. Struppa,} 
{  A new theory of regular functions of a quaternionic variable}.
\textit{Adv. Math.}, 216 (2007), 279--301.

\bibitem{GentiliStoppatoStruppa2013} 
\textsc{G. Gentili, C. Stoppato, D.C. Struppa,} 
\textit{Regular functions of a quaternionic variable}, Springer Monographs
in Mathematics, Springer, Berlin-Heidelberg.

\bibitem{Gh13ITSF}  \textsc{A. Ghanmi,}
Operational formulae for the complex Hermite polynomials $H_{p,q}(z, \bar z)$.
\textit{Integral Transforms Spec. Funct.} 24 (2013), no. 11, 884--895.

\bibitem{Gh2020} \textsc{A. Ghanmi,} 
        {Concrete characterization of the range of weighted planar Cauchy transform}. Submitted.
        
\bibitem{GhiloniMorettiPerotti2013} 
\textsc{R. Ghiloni, V. Moretti, A. Perotti,} Continuous slice functional calculus in
quaternionic Hilbert spaces, Rev. Math. Phys. 25 (2013), no. 4, 1350006, 83


\bibitem{In} \textsc{A. Intissar, A. Intissar,}
{Spectral properties of the Cauchy transform on $L^{2}(\C ,e^{-|z|^{2}}d\lambda(z)$.} 
\textit{J. Math. Anal. Appl.}, 313, no 2 (2006) 400-418.

\bibitem{MagnusOberhettingerSoni1966}   \textsc{W. Magnus,  F. Oberhettinger, R.P. Soni,}
\textit{Formulas and Theorems in the Special Functions of Mathematical Physics}. 
Springer -Verlag, Berlin, 1966

\bibitem{russo} \textsc{B. Russo,} On the Hausdorff--Young theorem for integral operators, 
\textit{Pacific J. Math.}, 68 (1977) 241-253.

\end{thebibliography}
\end{document}